\font\emailfont=cmtt10
\newtheorem{theorem}{Theorem}[section]
\newtheorem{lemma}[theorem]{Lemma}
\newtheorem{proposition}[theorem]{Proposition}
\newtheorem{corollary}[theorem]{Corollary}
\theoremstyle{definition}
\theoremstyle{remark} \newtheorem{remark}[theorem]{Remark}
\newcommand{\Q}{\mathbb{Q}} \newcommand{\Z}{\mathbb{Z}}
\DeclareMathOperator{\Kom}{Kom} \DeclareMathOperator{\Mat}{Mat}
\DeclareMathOperator{\Cob}{Cob} \DeclareMathOperator{\id}{id}
\title[{The Khovanov homology of 3-strand pretzels, revisited.}]  {The
  Khovanov homology of 3-strand pretzels, revisited.}
\author[Andrew Manion]{Andrew Manion} \address{Department of
  Mathematics, Princeton University, New Jersey 08544 \newline
  \indent{\emailfont{amanion@math.princeton.edu}}} \thanks{The author
  was supported by an NDSEG research fellowship.}
\begin{document}

\begin{abstract} We compute the reduced Khovanov homology of
$3$-stranded pretzel links. The coefficients are the integers with the
  ``even'' sign assignment. In particular, we show that the only
  homologically thin, non-quasi-alternating 3-stranded pretzels are
  $P(-p,p,r)$ with $p$ an odd integer and $r \geq p$ (these were shown
  to be homologically thin by Starkston \cite{Starkston} and Qazaqzeh
  \cite{Qazaqzeh}).
\end{abstract}

\maketitle
\section{Introduction} The purpose of this paper is to present a
computation of the reduced Khovanov homology (introduced in
\cite{KhovCat}) of all 3-strand pretzel links. There have been several
computations of Khovanov homology for partial families of pretzel
knots (see Starkston \cite{Starkston}, Suzuki \cite{Suzuki}, Qazaqzeh
\cite{Qazaqzeh}), and one by the author \cite {Manion} computing the
unreduced homology over $\Q$ for all 3-strand pretzels. Whereas
\cite{Manion} used the unoriented skein exact sequence in Khovanov
homology, this paper will use a shorter and more conceptual argument
relying on Bar-Natan's cobordism formulation of Khovanov homology for
tangles (see \cite{TangCob}). We will determine the reduced homology
over $\Z$, with the standard (``even'') sign assignment.

One caveat is required: the reduced Khovanov homology of
multi-component links depends on which component has the
basepoint. For $2$- and $3$-component 3-stranded pretzel links (those
$P(p,q,r)$ where two or three of $\{p, q, r\}$ are even), we will only
do the computation with one particular choice of basepoint. One could
apply the same method with the other basepoint choices, but the
details would be different enough that we decided not to write them
up.

\begin{figure} \centering
  \includegraphics{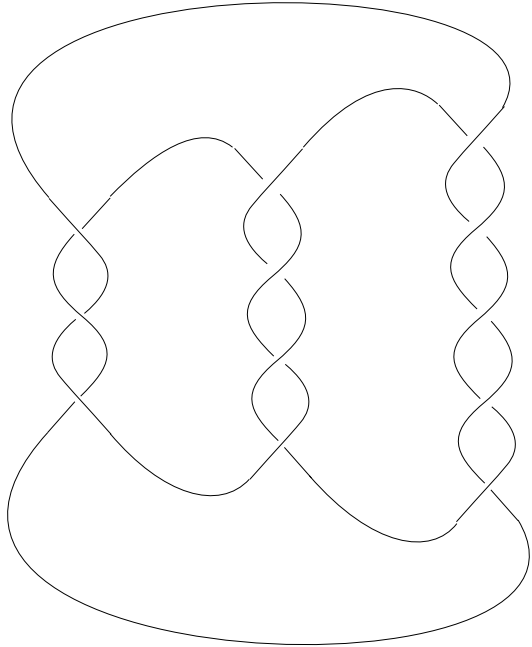}
  \caption{The pretzel knot $P(-3,4,5)$.}
  \label{neg1K}
\end{figure}

The usual diagram of the pretzel knot $P(-3,4,5)$ is shown in
Figure~\ref{neg1K}. We will use the equivalent diagram in
Figure~\ref{3K}. The general 3-strand preztel link is $P(p,q,r)$ where
$p$, $q$, and $r$ are arbitrary integers. Up to mirroring, though, we
may assume that at most one of $\{p, q, r\}$ is negative. If none are
negative, then the link is alternating and its Khovanov homology is
determined by its signature and Jones polynomial, by a result of Lee
from \cite{Lee}. So we will restrict attention to $P(-p,q,r)$ with
$\{p,q,r\}$ positive. We may even assume $q \leq r$ for convenience,
because of the symmetry of 3-stranded pretzels.

For some choices of $p$, $q$, and $r$, $P(-p,q,r)$ is
quasi-alternating, and the same argument applies as with alternating
links. Hence we may restrict attention to non-quasi-alternating
pretzel links. Results of Champanerkar-Kofman in \cite{ChKofQA} and
Greene in \cite{Greene} imply that these are $P(-p,q,r)$ with $p,q,r$
all positive, $p \leq q, r$ (or $p \leq q \leq r$), and $p \geq
2$. (Note that a simple isotopy of the standard diagram for
$P(-1,q,r)$ yields an alternating diagram.)

The formula for the bigraded Khovanov homology of these links takes a
bit of work to write down. Complicating matters is the issue of
orientations; to get the right gradings, we must decide on the
relative orientations of components of $P(-p,q,r)$ when it is a
multi-component link. To avoid cluttering the introduction, we will
state a simpler formula here. Recall the $\delta$-grading on Khovanov
homology; we will define it as $\delta = q/2 - h$, where $q$ is the
quantum grading and $h$ is the homological grading. (In \cite{KPKH},
Rasmussen defines it as $q - 2h$; the $1/2$ is a matter of
preference.) With this collapse of the gradings, Khovanov homology
becomes a singly-graded theory.

The link $P(-p,q,r)$ is a knot when at most one of $\{p, q, r\}$ are
even. We need not consider the case of ``only $r$ even'' separately
from the case of ``only $q$ even,'' since these cases are interchanged
by the symmetry of pretzel knots (here we're not requiring $q \leq
r$). Below we state the $\delta$-graded formula for knots; this has
the advantage that we do not need to mention orientations at all.

\begin{theorem}\label{maintheorem} Let $p,q,r$ be as above, such that
$P(-p,q,r)$ is a non-quasi-alternating knot (for links, see
  Theorem~\ref{maintheorem2}). Let $H_{\delta}$ be the reduced
  Khovanov homology of the knot $P(-p,q,r)$ in grading $\delta$.
\begin{itemize}
\item If $p,q,r$ are all odd, then $H_0 = \Z^{p^2 - 1}$ and $H_{-1} =
  \Z^{(q-p)(r-p) - 1}$. All other $H_{\delta}$ are 0.
\item If $p$ is even, then $H_{\frac{q+r}{2}} = \Z^{p^2}$ and
  $H_{\frac{q+r}{2} - 1} = \Z^{(q-p)(r-p)}$. All other $H_{\delta}$
  are 0.
\item If $q$ is even, then $H_{\frac{-p+r}{2}} = \Z^{p^2 - 1}$ and
  $H_{\frac{-p+r}{2} - 1} = \Z^{(q-p)(r-p) - 1}$ in $\delta =
  \frac{-p+r}{2} - 1$. All other $H_{\delta}$ are 0.
\end{itemize} When $p$ is odd and $p = q$, the formula gives a $-1$ in
the lower $\delta$-grading. This should be interpreted as a $0$, with
a $1$ added to the rank of the higher $\delta$-grading.
\end{theorem}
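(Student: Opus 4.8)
The plan is to compute the Bar--Natan cobordism complex of the standard pretzel diagram directly from its decomposition into twist regions. Present $P(-p,q,r)$, as in the diagram underlying Figure~\ref{neg1K}, as the closure of a planar composition of three elementary $2$-strand tangles $T_{-p}$, $T_q$, $T_r$ with $p$, $q$, $r$ crossings, glued along a fixed system of connecting arcs. First I would record the simplification of a single twist region: inducting on the number of crossings, delooping the circle produced each time a crossing is resolved to its ``wrong'' smoothing and Gaussian-eliminating the resulting isomorphism, one finds that $[\![T_n]\!]$ in $\Kom(\Mat(\Cob))$ is homotopy equivalent to a ``zigzag'' --- one copy of the identity tangle $\mathbf 1$ at one end and $|n|$ copies of the cap--cup tangle $e$, in consecutive homological degrees, joined by saddle cobordisms (some decorated with a dot), with the sign of $n$ controlling which end carries $\mathbf 1$ and the direction of the maps. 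This is essentially Bar--Natan's computation \cite{TangCob}; I would re-derive it to fix conventions and the internal quantum shifts, deferring the global shift $[-n_-]\{n_+ - 2n_-\}$ to the end.

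Next I would tensor the three zigzags inside the fixed planar skeleton and close up. The objects of the resulting triple complex are indexed by a triple recording, for each region, whether it sits at the $\mathbf 1$-end or at which of its $e$-terms; each object is a disjoint union of between one and three closed curves, governed by an easy local rule (the number of regions at their $\mathbf 1$-terms determines the count). Delooping each circle and applying the reduced Khovanov TQFT turns the link complex into a finite complex of free graded abelian groups whose generators are labelled by the index triples together with delooping choices, and --- crucially, because we use the even sign assignment --- with all relevant differential entries equal to $0$ or $\pm 1$.

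The heart of the argument is the simplification of this complex. Within each twist region the zigzag differentials are isomorphisms on all but one of the delooped summands, so repeated Gaussian elimination cancels all but a thin stratum of the generators. I expect the survivors to be controlled by a simple constraint among the three stage-indices --- roughly, either the $p$-region fully twisted with the $q$- and $r$-regions coupled below stage $p$, or the $p$-region at its $\mathbf 1$-end with the $q$- and $r$-regions past stage $p$ --- leaving exactly $p^2$ (resp.\ $p^2-1$) generators in one bidegree and $(q-p)(r-p)$ (resp.\ $(q-p)(r-p)-1$) in the adjacent one. The crucial point is that the residual differential then vanishes: equivalently, after all cancellations no two surviving generators sit one homological degree apart in the same quantum degree. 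I would check this directly (it is the typical terminal behaviour of twist-region complexes, and here it must occur since the claimed answer is torsion-free); it is exactly what forces the homology onto the two $\delta$-diagonals in the statement.

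A final pass through the global shifts places the two blocks in the $\delta$-gradings of the three bullets; the trichotomy ``$p$ odd / $p$ even / $q$ even'' reflects the position of the $\mathbf 1$-ends of the zigzags and whether the leftover closure curves contribute an extra tensor factor in reduced homology, each such feature shifting a $\delta$-grading by a half-integer. The degenerate case $p=q$ odd, where the formula returns $-1$ below, is precisely where one more Gaussian elimination is available --- the last $e$-term of the $q$-zigzag collides with a boundary term of the composition --- so that block is empty and its would-be generator is forced into the higher $\delta$-grading, which is the meaning of the stated convention; this I would verify by hand, observing consistency with the thinness of $P(-p,p,r)$ and with $\det P(-p,p,r) = p^2$. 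The main obstacle is the bookkeeping of the iterated Gaussian elimination over the triple complex, including the proof that the residual differential is zero and that no entry $\pm 2$ obstructs a cancellation; the single-region simplification, the curve counts, and the closing grading/sign reconciliation over $\Z$ are routine but must be done with care.
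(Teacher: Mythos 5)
Your overall strategy -- compute in Bar--Natan's dotted cobordism category, simplify each twist region to a zigzag by delooping and Gaussian elimination, tensor the three zigzags, and repeatedly cancel -- is exactly the paper's strategy (Lemma~\ref{inductive}, Corollary~\ref{mirror}, and the ensuing analysis of the triple cube). However, your outline has a genuine gap at the one place where the computation has real content, namely the mechanism by which the parity of $p$ produces the dichotomy $p^2$ versus $p^2-1$ and $(q-p)(r-p)$ versus $(q-p)(r-p)-1$.

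After the columns are cancelled, certain wall-to-floor components of the differential are \emph{signed sums of $p$ parallel paths}, not single entries, so the claim that ``all relevant differential entries are $0$ or $\pm 1$'' does not follow merely from working with the even sign assignment. The paper establishes it by observing that consecutive paths in such a family ``differ by a cube'' and therefore have opposite signs (Remark~\ref{cubediffer}), so the total is $0$ when $p$ is even and $\pm 1$ when $p$ is odd (Proposition~\ref{mapprop}). That alternating-sign argument is precisely what separates the ``$p$ even'' and ``$p$ odd'' bullets of the theorem; without supplying it, you cannot derive the formula. Appealing to ``the claimed answer is torsion-free'' to conclude that the residual differential vanishes is circular: the answer is what you are trying to prove. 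What actually makes the final differential vanish is a relative $\delta$-grading analysis (the survivors occupy two adjacent $\delta$-gradings $g$ and $g+1$, with the differential strictly decreasing $\delta$), together with the explicit cancellation scheme. Relatedly, your description of the survivors as living in ``one bidegree and the adjacent one'' is incorrect: the floor and wall generators are spread across many $(h,q)$ pairs (hence the nontrivial Poincar\'e polynomials $\phi_{p,q,r}$ and $\psi$ in Theorem~\ref{maintheorembig}); they are concentrated on two $\delta$-diagonals, which is a much weaker constraint and does \emph{not} automatically kill the residual differential. To repair the proposal you would need to carry out the signed path count, prove the alternating-sign cancellation, and then do the relative-grading bookkeeping, which together constitute the bulk of the paper's argument.
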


In the course of proving this theorem (or rather
Theorem~\ref{maintheorem2} for links), we will see how the bigraded
homology could be computed without any more real work. In fact, we
will ignore gradings throughout most of the paper, and then deduce
them at the end when needed.

\subsection{Homological thinness.}  Starkston in \cite{Starkston} and
Qazaqzeh in \cite{Qazaqzeh} were interested in the class of
homologically thin, non-quasi-alternating pretzel knots. Starkston
conjectured, and Qazaqzeh proved, that all $P(-p,p,r)$ pretzel knots
with $p$ odd and $r \geq p$ are homologically thin but not
quasi-alternating. Our results here (e.g. Theorem~\ref{maintheorem2})
imply that these are the only homologically thin,
non-quasi-alternating 3-column pretzel links. All other non-QA ones
(including, e.g. $P(-p,p,r)$ with $p$ even and $r \geq p$) are
homologically thick. The same result could be deduced from the
unreduced homology calculated in \cite{Manion}, but this paper's
emphasis on the $\delta$-grading makes it easier to see.

\subsection{Acknowledgements.}  The author would like to thank
Zolt{\'a}n Szab{\'o} for many helpful discussions during the course of
this work.

\section{Khovanov homology computation.}  We will use Bar-Natan's
dotted cobordism formulation of Khovanov homology in this section; see
Section 11.2 of \cite{TangCob}. In particular, if $T$ is a tangle
diagram, then its Khovanov chain complex is an object of the category
$\Kom(\Mat(\Cob^3_{\cdot / l}))$.

\subsection{Local preliminaries.}  The computation will use a lemma
about the formal dotted-cobordism complex associated to a series of
$n$ half-twists. Effectively, the lemma is Proposition 25 of
Khovanov's original paper \cite{KhovCat}, interpreted in the language
of dotted cobordisms. We will give a proof here, to keep this paper as
self-contained as possible.

First, though, we recall a fact about dotted cobordism pictures. Let
$D$ be a crossingless tangle diagram. If $D$ contains a complete
circle $c$, then $D$ is isomorphic in the category $\Mat(\Cob^3_{\cdot
  / l})$ to $D' \oplus D'$, where $D'$ is $D$ with $c$ removed. This
``delooping'' isomorphism is written down by Bar-Natan in
\cite{FastKH}; the proof consists of a diagram, reproduced here for
convenience (with a few modifications) in Figure~\ref{0K}.

Whenever we apply a delooping isomorphism to remove a circle, one of
the two resulting summands will have a dot in the lower-right corner
and one will not. The dot indicates the element whose $q$-grading is
shifted by $-1$ rather than $+1$. The maps $F$ and $G$ in
Figure~\ref{0K} are inverses of each other, proving the delooping
isomorphism.

\begin{figure}
\labellist \small \hair 2pt \pinlabel $\emptyset$ at 176 127 \pinlabel
$\emptyset$ at 176 35 \pinlabel $\oplus$ at 176 80 \pinlabel $F$ at 56
2 \pinlabel $G$ at 290 2 \endlabellist \centering \includegraphics{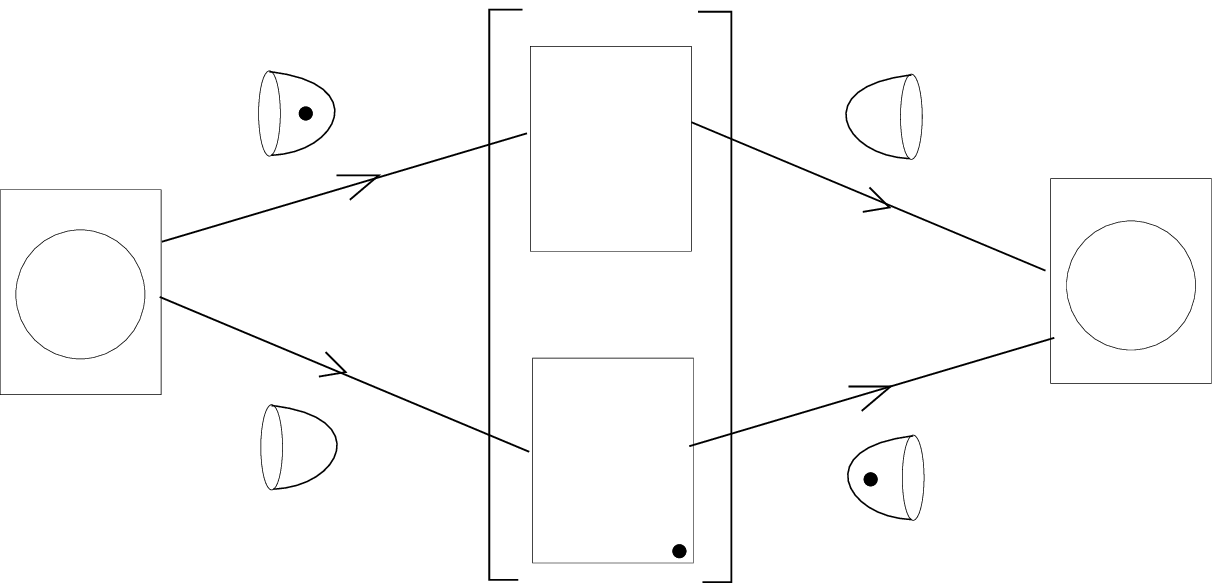}
  \caption{Bar-Natan's delooping isomorphism, taken from
    \cite{FastKH}.}
  \label{0K}
\end{figure}

The lemma we need is the following:

\begin{lemma}\label{inductive} The (formal) Khovanov chain complex of
the positive $n$-half-twisted strand on the left side of
Figure~\ref{1K} is homotopy equivalent to the dotted-cobordism complex
on the right side of Figure~\ref{1K}.
\end{lemma}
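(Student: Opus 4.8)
The plan is to induct on $n$, the number of half-twists. The base case $n=1$ is the definition of the Khovanov complex of a single crossing: the $0$- and $1$-resolutions connected by a saddle cobordism, which one checks matches the $n=1$ instance of the right-hand side of Figure~\ref{1K} on the nose (up to the grading shifts built into the conventions). For the inductive step, I would take the complex for $n-1$ half-twists, assume by induction it is homotopy equivalent to the claimed model, and then compute the complex for $n$ half-twists as the mapping cone obtained by composing with one more crossing. Concretely, stacking one additional positive crossing on the $(n-1)$-twist tangle gives a complex built by tensoring (in the planar-algebra/composition sense of $\Kom(\Mat(\Cob^3_{\cdot/l}))$) the inductive model with the two-term complex of the new crossing; expanding this produces a complex whose underlying objects are crossingless tangle diagrams, some of which contain a closed circle.

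The key technical move is then to apply Bar-Natan's delooping isomorphism (Figure~\ref{0K}) to every summand that contains a closed circle, replacing it with two circle-free summands, one carrying a dot in the lower-right corner. After delooping, the differentials become matrices of dotted cobordisms between circle-free diagrams; I would compute these explicitly using the relations in $\Cob^3_{\cdot/l}$ (the sphere, torus, neck-cutting, and ``two dots = zero'' relations), and identify the pieces that are isomorphisms. Each such isomorphism entry lets me perform a Gaussian-elimination (cancellation) step, stripping off an acyclic direct summand of the form $(X \xrightarrow{\cong} X)$. What survives after all the cancellations should be precisely the $n$-twist model on the right of Figure~\ref{1K}; tracking the dots introduced by delooping against the dots appearing in the target picture is what makes the bookkeeping work out, and getting the signs and grading shifts to line up is essentially forced once the shapes match.

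The main obstacle I expect is purely organizational rather than conceptual: keeping careful track, through the delooping and the successive Gaussian eliminations, of exactly which cobordisms carry dots, and verifying that the composite of the various homotopy equivalences is well-defined (that the cancellations can be performed in a consistent order, and that no two eliminations interfere). There is also a minor case split depending on the parity of $n$ — the shape of the stable complex in Figure~\ref{1K} alternates slightly — and one must check the inductive step handles both parities uniformly, or handle the step from even to odd and odd to even separately. Once the delooping-and-cancel mechanism is set up cleanly, each inductive step is a short finite computation, and the lemma follows.
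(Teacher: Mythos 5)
Your proposal is correct and takes essentially the same approach as the paper: induction on $n$, tensoring with one more crossing, Bar-Natan delooping of the resulting circles, then Gaussian elimination of the invertible entries, leaving the claimed model. The paper is somewhat more explicit about the sign bookkeeping (each elimination picks up one $-1$ from the eliminated edge and one from the elimination formula, so the surviving dotted maps stay positive), whereas you defer this to "essentially forced," but the strategy is identical.
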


\begin{figure}
\labellist \small \hair 2pt \pinlabel $n$ at 49 56 \pinlabel $n$ at
286 6 \pinlabel $[00\ldots0]$ at 135 81 \pinlabel $[10\ldots0]$ at 193
81 \endlabellist \centering \includegraphics{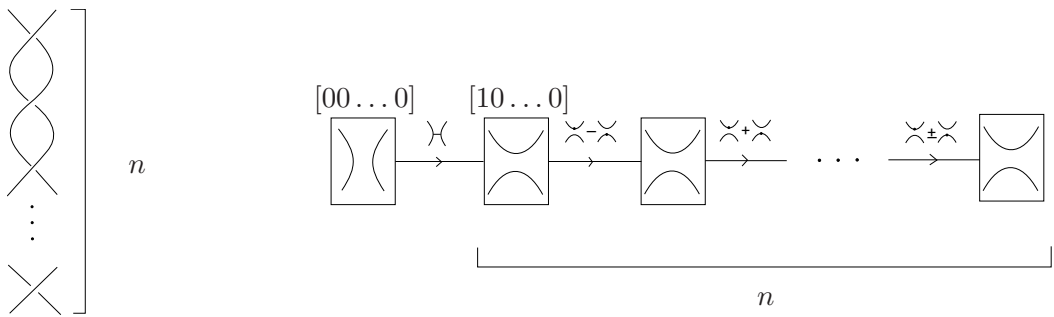}
  \caption{Simplification of Kh(positive $n$-twisted tangle). There
    are $n$ boxes (and $n-1$ maps) in the section labeled on the
    right.  The final sign is $-$ for $n$ even and $+$ for $n$ odd.}
  \label{1K}
\end{figure}

\begin{remark} We have ignored gradings in Lemma~\ref{inductive}
because it's not necessary for our purposes to keep track of them
now. We will be able to deduce them later given our knowledge of the
boundary map. However, due to signs, it is still important to order of
the crossings. We order the crossings in the $n$-half-twisted strand
from bottom to top, and on the right side of the diagram, we show how
the first few generators are labelled.
\end{remark}

\begin{proof}[Proof of Lemma~\ref{inductive}.]  We will induct on $n$;
the case $n = 1$ is true by the definition of the formal complex for a
single crossing. Assume the lemma is true for $n - 1$; then we can use
the induction hypothesis to replace the formal complex for the
$n$-twisted strand by the one shown at the top of
Figure~\ref{2K}. Delooping to get rid of the complete circles, we get
the complex at the bottom of Figure~\ref{2K}.

\begin{figure}
\labellist \small \hair 2pt \pinlabel $a_1$ at 392 83 \pinlabel $a_2$
at 314 83 \pinlabel $a_3$ at 242 83 \pinlabel $a_{n-1}$ at 74 83
\pinlabel $-\id$ at 44 105 \pinlabel $-\id$ at 145 131 \pinlabel
$-\id$ at 230 131 \pinlabel $-\id$ at 308 131 \pinlabel $-\id$ at 386
131 \pinlabel $n-1$ at 259 210 \pinlabel $n-1$ at 255 -9 \endlabellist
\centering \includegraphics{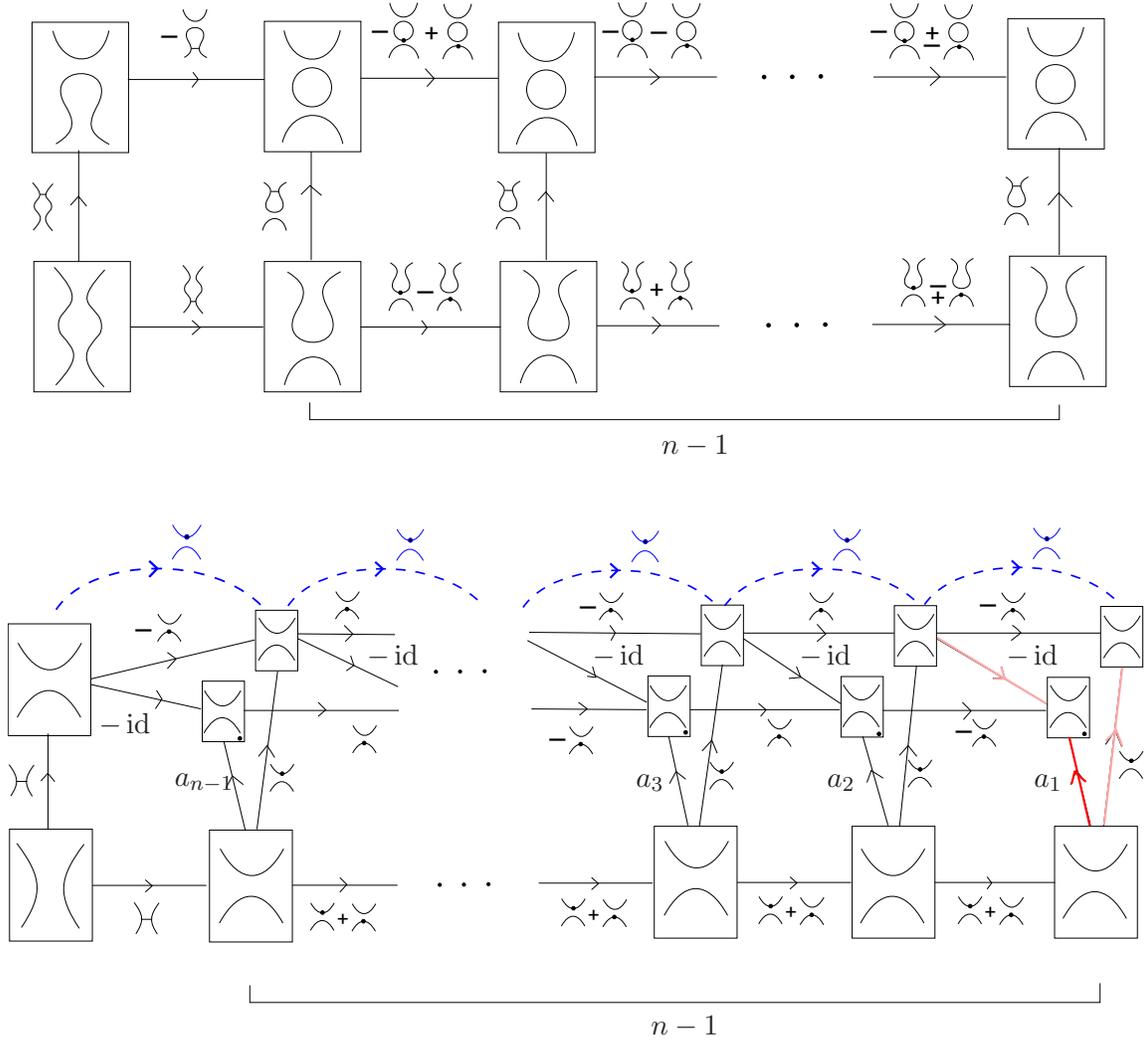}
  \caption{Inductive step of Lemma~\ref{inductive}.}
  \label{2K}
\end{figure}

Now we simplify using Gaussian elimination, as described in this
context by Bar-Natan in \cite{FastKH}. Whenever we see an invertible
matrix coefficient in the differential, we remove the two
corresponding generators, and add in some ``zig-zag'' terms. Suppose
the invertible coefficient is $a$, from generator $g_1$ to generator
$g_2$ (write $a: g_1 \rightarrow g_2$ for convenience). Whenever we
have $b: h \rightarrow g_2$ and $c: g_1 \rightarrow k$, we must add
$-c \circ a^{-1} \circ b$ to the coefficient from $h$ to $k$. Then the
resulting complex with $g_1$ and $g_2$ removed is homotopy equivalent
to the original one.

The edges labelled $a_1, \ldots, a_{n-1}$ all represent identity
cobordisms with positive sign; we eliminate them in order, starting
with $a_1$. Each elimination has one negative sign from the edges and
another from the elimination formula, so the dotted maps in
Figure~\ref{2K} get positive signs. The result is the complex we want.
\end{proof}

\begin{figure}
\labellist \small \hair 2pt \pinlabel $n$ at 285 10 \pinlabel $n$ at
45 63 \pinlabel $[11\ldots1]$ at 135 81 \pinlabel $[01\ldots1]$ at 193
81 \endlabellist \centering \includegraphics{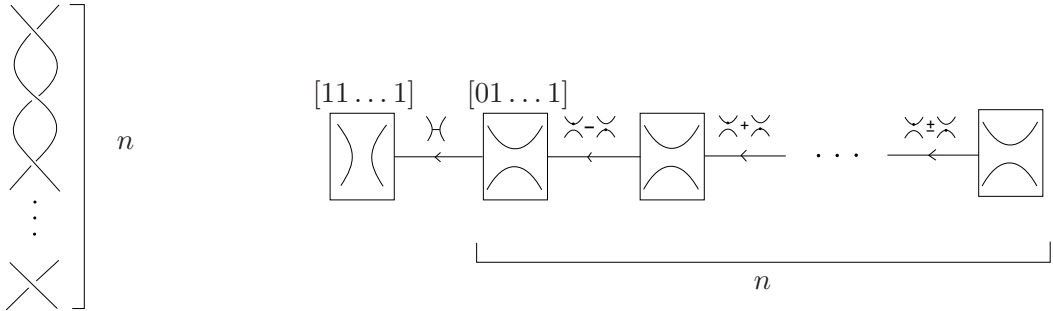}
  \caption{The negative analogue of Figure~\ref{1K}.}
  \label{31K}
\end{figure}

Taking the mirror image of this lemma gives us:

\begin{corollary}\label{mirror} The (formal) Khovanov chain complex of
the negative $n$-half-twisted strand on the left side of
Figure~\ref{31K} is homotopy equivalent to the dotted-cobordism
complex on the right side of Figure~\ref{31K}.
\end{corollary}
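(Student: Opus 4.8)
The plan is to obtain Corollary~\ref{mirror} from Lemma~\ref{inductive} by mirroring, rather than by re-running the induction. Recall that Bar-Natan's category $\Kom(\Mat(\Cob^3_{\cdot/l}))$ carries a contravariant ``reflection'' functor: it sends a crossingless tangle diagram $D$ to its reflection $\overline{D}$ through a horizontal axis, a dotted cobordism $\Sigma$ to the reflected cobordism $\overline{\Sigma}$ (with the same dots, and with its two boundary tangles interchanged), and a complex $(C_\bullet,d_\bullet)$ to the complex with objects $\overline{C_{-\bullet}}$ and differentials $\overline{d_{-\bullet}}$, i.e.\ all arrows reversed and homological degrees negated. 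Bar-Natan shows (see \cite{TangCob}; this is also implicit in \cite{FastKH}) that applying this functor to the formal Khovanov complex of a tangle $T$ yields the formal Khovanov complex of the mirror tangle $\overline{T}$, interchanging positive and negative crossings. Since the functor is additive, it carries a homotopy equivalence to a homotopy equivalence: a null-homotopy of $\id - fg$ is carried to a null-homotopy of $\id - \overline{g}\,\overline{f}$.

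First I would apply this reflection functor to both sides of the homotopy equivalence of Lemma~\ref{inductive}. The left-hand side, the formal complex of the positive $n$-half-twisted strand, is carried exactly to the formal complex of the negative $n$-half-twisted strand appearing on the left of Figure~\ref{31K}. So it remains only to check that the reflection functor carries the right-hand complex of Figure~\ref{1K} to the right-hand complex of Figure~\ref{31K}.

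This last check is a direct inspection of the finitely many local pictures involved. Reversing the arrows and negating homological degrees turns a complex of length $n+1$ running away from the ``$\parallel$''-type smoothing $[00\ldots0]$ into one of length $n+1$ running toward the ``$\parallel$''-type smoothing $[11\ldots1]$, and relabels the remaining $n$ smoothings $[10\ldots0],\ldots,[11\ldots1]$ of Figure~\ref{1K} as the $n$ smoothings $[01\ldots1],\ldots,[00\ldots0]$ of Figure~\ref{31K}; this is precisely the effect of interchanging $0$- and $1$-smoothings at each crossing when passing to the mirror. Each arrow of the right-hand complex of Figure~\ref{1K} is either an (undotted) saddle or a dotted cobordism, and reflection carries each to the same cobordism read in the opposite direction, so the $-\id$ arrows remain $-\id$ and the dotted arrows are unchanged; reflecting the complex transposes the matrix of each differential but does not alter the cobordism coefficients or their signs, so the ``final sign'' is still $-$ for $n$ even and $+$ for $n$ odd. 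The only real care needed is in this step: one must confirm that the reflection functor genuinely fixes each dotted cobordism occurring in the simplified complex — not moving a dot between sheets or altering the combinatorics of the surface — and that the homological re-indexing is compatible with the bottom-to-top crossing order fixed in the remark following Lemma~\ref{inductive}. With those local identifications in hand, the corollary follows at once.
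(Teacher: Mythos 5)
Your approach is the same as the paper's, which proves the corollary in one line (``Taking the mirror image of this lemma gives us\ldots'') and then addresses signs in the paragraph immediately following; your proposal is a careful unpacking of exactly that mirroring argument via the contravariant duality functor on $\Kom(\Mat(\Cob^3_{\cdot/l}))$. One small caution: where you assert that ``the $-\id$ arrows remain $-\id$ \ldots so the final sign is still $-$ for $n$ even and $+$ for $n$ odd,'' the paper is more circumspect --- it allows that the global sign on any given edge could come out opposite to what is drawn, and then observes that this does not matter because two complexes differing only by global signs on some edges (with $d^2=0$ preserved) are isomorphic; you should invoke that isomorphism-flexibility rather than claim exact equality of the depicted signs.
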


Strictly speaking, the global sign on any edge could turn out to be
the opposite of the one depicted in Figure~\ref{31K}. But, up to
isomorphism of complexes, we may assume the signs are as shown. Note
that if we labelled the crossings in the $n$-half-twisted strand ``up
to down'' rather than ``down to up,'' we would get some complex that
looks identical except for global signs on the edges (the relative
signs on each edge are required by $d^2 = 0$). So by the same logic,
the result is independent of this choice up to isomorphism.

\begin{figure}
\labellist \small \hair 2pt \pinlabel $p$ at 109 140 \pinlabel $q$ at
48 89 \pinlabel $r$ at 151 88 \pinlabel $1$ at 182 163 \pinlabel $2$
at 22 174 \pinlabel $3$ at 94 16 \endlabellist \centering
\includegraphics{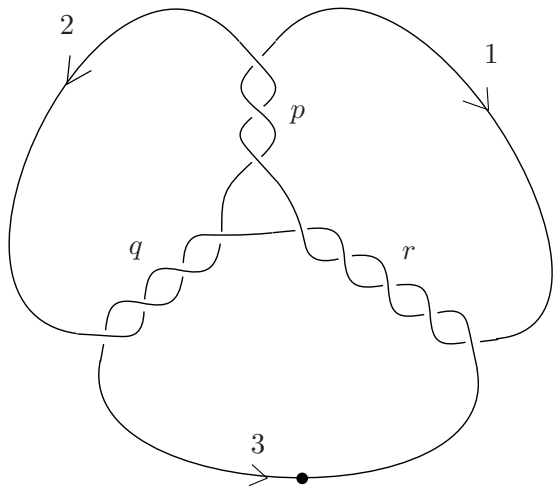}
  \caption{The pretzel knot $P(-3,4,5)$ (or $(-p,q,r)$ in general).}
  \label{3K}
\end{figure}

\subsection{Diagrams and orientations.}  Now consider the pretzel link
$P(-p,q,r)$, with a diagram $D$ and basepoint drawn as in
Figure~\ref{3K}. Order the crossings so that those in the $p$-strand
come first, then those in the $q$-strand, then those in the
$r$-strand. Within each strand, the crossings should be ordered from
one end to the other; as noted above, it doesn't matter which end is
which.

Since the Khovanov homology of links depends on relative orientations
between the components, we need to be able to specify these
orientations. We will do this here, although it will not be needed
until we fix absolute gradings at the end.

There are three arrows in Figure~\ref{3K}, labelled $1$, $2$, and $3$
(the directions are chosen somewhat arbitrarily to agree with the
example in that diagram). We will say a pretzel link has orientation
$+++$ if its orientations agree with the three arrows, or $-++$ if
they disagree at position 1 but agree at 2 and 3, etc. Since Khovanov
homology is invariant under overall change of orientation, we may fix
once and for all a $+$ in the third column, i.e. our links will always
be oriented in agreement with arrow 3. Then we may simply write $++$,
$-+$, etc. for the orientation of the link at positions 1 and 2.

When $P(-p,q,r)$ is a knot, the orientations at positions 1 and 2 are
fixed. If $q$ is even (and $p$ and $r$ are odd), like in
Figure~\ref{3K}, the orientation is $++$. If only $p$ is even, the
orientation is $+-$. If only $r$ is even, the orientation is
$--$. Finally, if $p$, $q$, and $r$ are all odd, the orientation is
$-+$.

When $P(-p,q,r)$ is a two-component link, two out of the four
orientation patterns are possible (depending on the parity of $p$,
$q$, and $r$). When it is a three-component link, all four patterns
are possible.

If we know the orientation pattern of the link, we can deduce the
number of positive and negative crossings; these will arise in the
grading formulas. We summarize them in Table~\ref{nplusnminus} for
convenience.

\begin{table}
  \begin{center}
    \begin{tabular}{|c||c|c|} \hline Pattern & $n_+$ & $n_-$ \\
\hline\hline $++$ & $r$ & $p+q$ \\ \hline $+-$ & $p+q+r$ & $0$
\\ \hline $-+$ & $p$ & $q+r$ \\ \hline $--$ & $q$ & $p+r$ \\ \hline
    \end{tabular}
  \end{center}
  \caption{Values for $n_+$, the number of positive crossings in the
    diagram, and $n_-$, the number of negative crossings, given the
    orientation pattern.}
  \label{nplusnminus}
\end{table}

\subsection{Computation of the complex.}  We now analyze the
dotted-cobordism Khovanov complex of $D$. The tangle complex of each
of the three twisted strands of $D$ may be simplified using
Lemma~\ref{inductive}. The formal complex of $D$ is then homotopy
equivalent to the cube shown in Figure~\ref{4K}, with
$(p+1)(q+1)(r+1)$ generators. Our goal will be to simplify this cube
even further, using delooping and elimination, until we understand the
differential completely.

While the resolution diagrams in Figure~\ref{4K} and subsequent
figures omit the basepoint for convenience, there should always be a
basepoint on each diagram as indicated in Figure~\ref{3K}. This
basepoint causes many cube differentials to be zero; those marked zero
in Figure~\ref{4K} are zero because they have a dot on the basepoint
component.

\begin{figure}
\labellist \small \hair 2pt \pinlabel $p+1$ at 468 163 \pinlabel $q+1$
at 69 33 \pinlabel $r+1$ at 359 28 \pinlabel $=0$ at 392 288 \pinlabel
$=0$ at 449 262 \pinlabel $=0$ at 14 124 \pinlabel
$[1\ldots1,0\ldots0,0\ldots0]$ at 213 355 \pinlabel
$[1\ldots1,0\ldots0,1\ldots1]$ at 62 308 \endlabellist \centering
\includegraphics{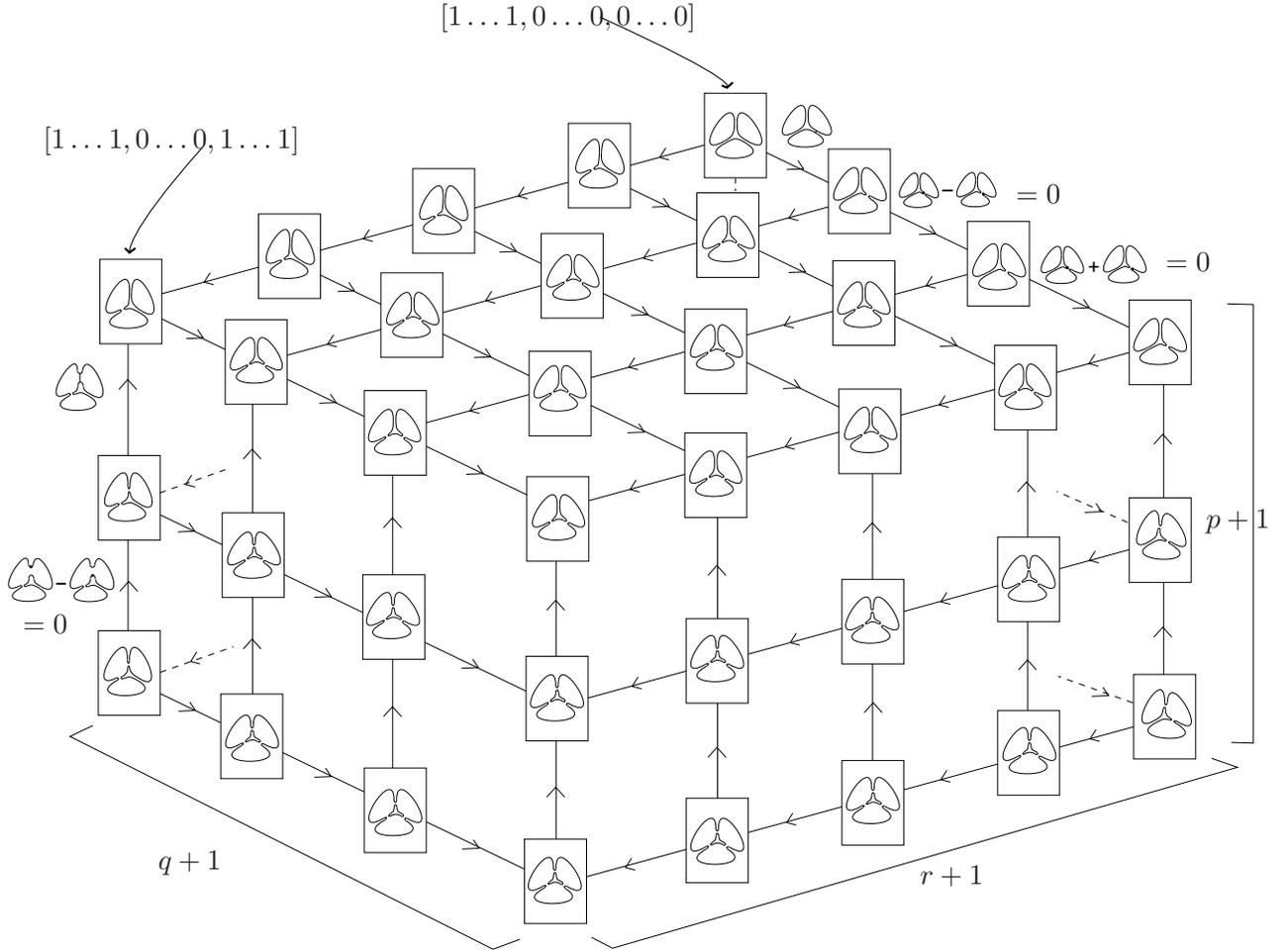}
  \caption{Chain complex homotopy equivalent to $Kh(P(-p,q,r))$, where
    $(p,q,r)$ = $(-2,3,4)$ in the picture. All of the arrows should
    have labels; only some are shown for convenience. We have also
    shown the crossings-label of two generators to indicate the
    general pattern. The generator $[0\ldots0,0\ldots0,0\ldots0]$ is
    precisely the corner of the cube obscured to the viewer.}
  \label{4K}
\end{figure}

\subsubsection{Columns in the cube.}  We start by thinking of the cube
as made up of vertical columns. Consider the columns which are not on
either of the two ``back walls'' of the cube as drawn in
Figure~\ref{4K}. We will consider these columns one at a time,
starting with the one closest to the viewer in Figure~\ref{4K}.

The left side of Figure~\ref{5K} shows any of these columns. According
to the discussion above, we can choose (once and for all) to make the
signs on the $p$-columns as depicted. (This corresponds to choosing
different global signs on various edges in Corollary~\ref{mirror}.) We
then apply delooping to get the column on the right of
Figure~\ref{5K}.
 
\begin{figure}
\labellist \small \hair 2pt \pinlabel $0$ at 47 250 \pinlabel $0$ at
47 197 \pinlabel $0$ at 47 129 \pinlabel $0$ at 47 73 \pinlabel $-\id$
at 188 292 \pinlabel $-\id$ at 202 242 \pinlabel $-\id$ at 200 183
\pinlabel $-\id$ at 206 118 \pinlabel $-\id$ at 206 65 \endlabellist
\centering \includegraphics{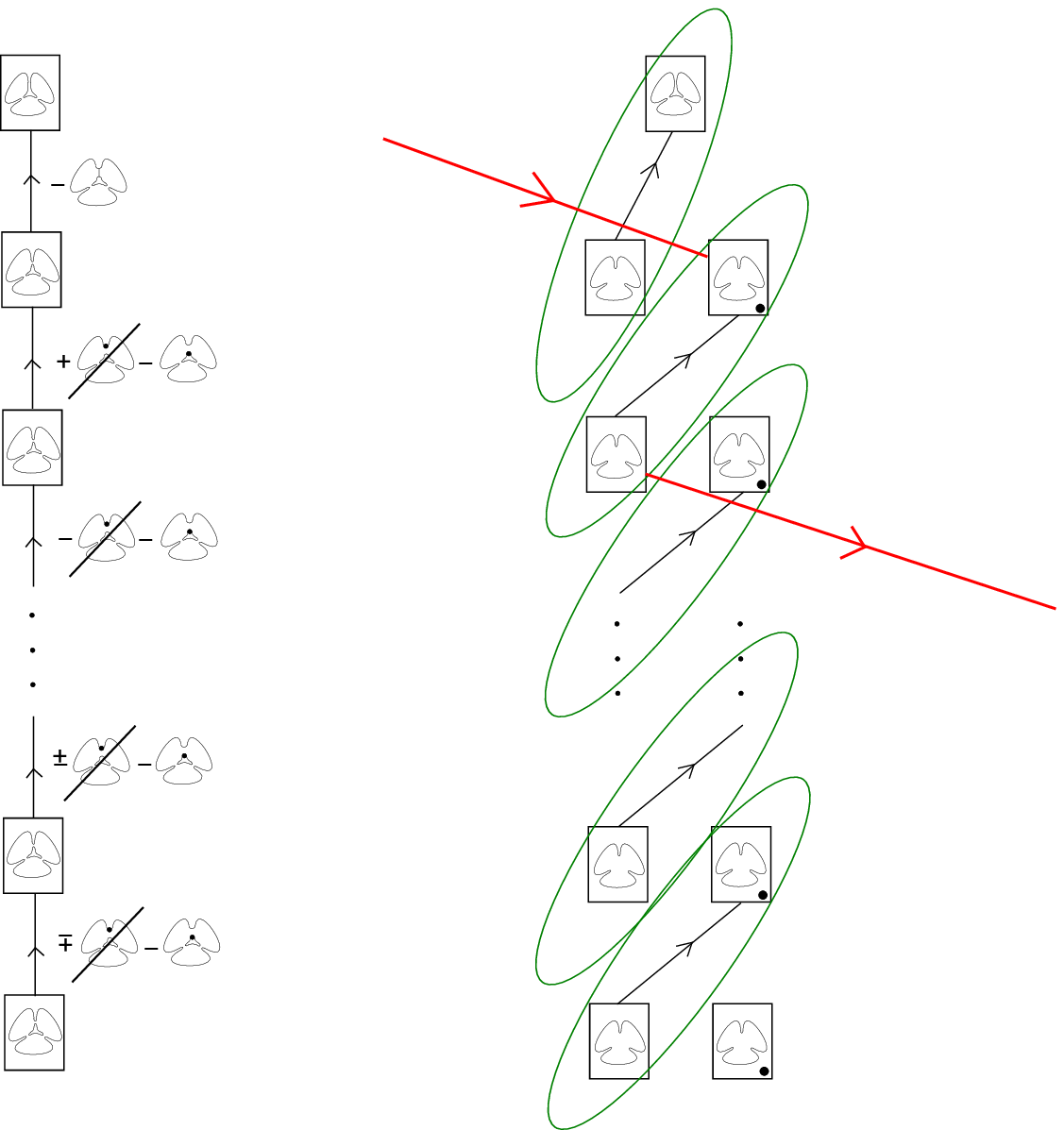}
  \caption{A vertical column of the cube, not on the back walls. The
    red arrows on the right are meant to suggest the zig-zag maps
    which result from cancelling circled pairs of generators.}
  \label{5K}
\end{figure}

\subsubsection{Paths.}  We want to see what happens when we simplify
each (non-back-wall) column by applying Gaussian elimination to the
maps marked $-\id$ in Figure~\ref{5K}. We will get a complex like the
one in Figure~\ref{6K}. It has two ``walls'' and a ``floor''; note
that we have applied delooping to the top vertices on each wall. In
this diagram, when a circle is delooped, we depict it as a dashed
circle in each resulting generator. There is a dot on one of the two
dashed circles.

The arrows in Figure~\ref{6K} all represent components of the
differential. There are more differentials like the red ones, based at
each wall generator except for the column where the walls
intersect. These arrows all point at floor generators, and they are
due to zig-zag maps arising from the eliminations we did in the
columns.

\begin{figure}
\labellist \small \hair 2pt \pinlabel $(=0)$ at 99 187 \endlabellist
\centering \includegraphics{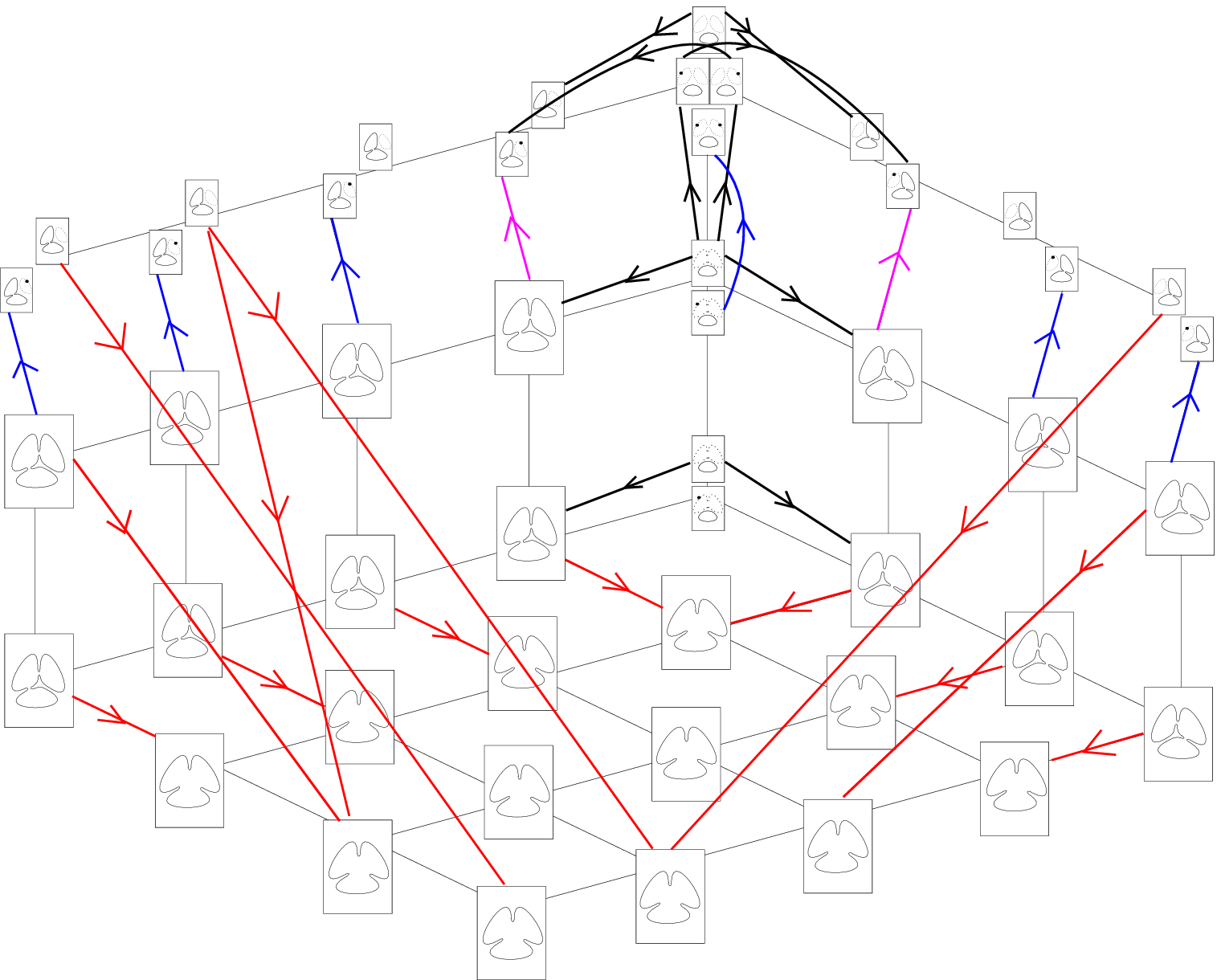}
  \caption{The cube after doing eliminations of all non-back-wall
    columns. All arrows represent components of the differential;
    there are more differentials like the red arrows that are not
    shown.}
  \label{6K}
\end{figure}

Such a component, arising from repeated eliminations, looks like a
path of arrows. The colored paths on the left of Figure~\ref{61K} are
examples. The paths must start and end in the positive $x$- or
$y$-direction, and must alternate such ``horizontal'' steps (always
positive) with vertical steps down (via the eliminated identity
components in the interior columns). Paths starting on the left wall
must start in the $y$-direction, and paths starting on the right wall
must start in the $x$-direction.

Each horizontal step in the path is $\pm \id$, except for steps
outward from the very tops of the walls. These top steps are join maps
before delooping; after, the ``top'' component is $\pm \id$ and the
``bottom'' one is $\pm$(dotted $\id$). However, none of the
dotted-$\id$ maps can contribute to nonzero maps remaining after
elimination, because they (eventually) put a dot on the
basepoint. Hence the only nonzero red arrows come from paths with each
step $\pm \id$.

Let $w$ be a (remaining) wall generator and $f$ be a floor
generator. Then $dw$ contains $n$ copies of $f$ where $n$ is the
signed count of paths from $w$ to $f$.

\subsubsection{Signs on paths.}  To actually compute $n$, we need to
pin down the signs on the paths. It is easiest to do this with an
example. In Figure~\ref{61K}, there is one path connecting $w$ to $f$
and three paths connecting $w'$ to $f'$. These paths are marked with
various colors. There is also a pattern drawn in orange at the right
of Figure~\ref{61K}. To compute the sign of a path, project it down to
the floor of the cube and walk along it backwards (starting from the
floor generator). Start with a $+$ sign. On the first step (and all
odd-numbered steps), the sign flips if the path traverses a black
edge. On the second step (and all even-numbered steps), the sign flips
if the path traverses an orange edge. The resulting sign is the sign
of the path. (Equivalently, we draw the opposite of the orange pattern
every other level, and then always flip on black edges.)

\begin{figure}
\labellist \small \hair 2pt 
\pinlabel {positive $x$} at 205 190
\pinlabel {positive $y$} at 318 178 
\pinlabel $w$ at 8 139 
\pinlabel $f$ at 120 -4 
\pinlabel $f'$ at 173 15 
\pinlabel $w'$ at 202 159
\endlabellist  
\centering
\includegraphics{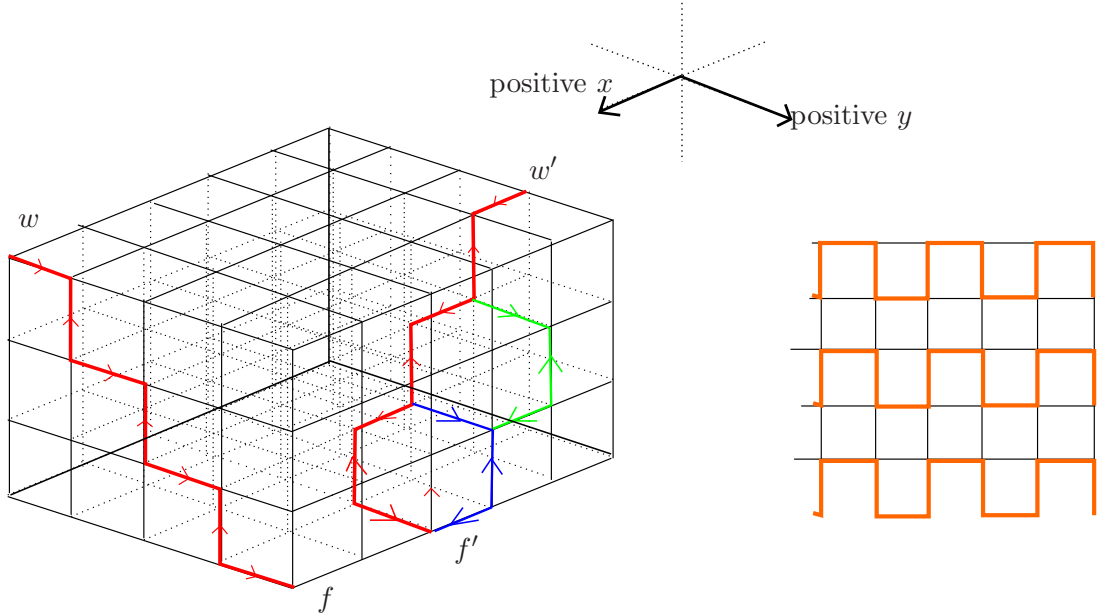}
  \caption{On the left: paths which contribute to the differential. In
    this diagram, $(p,q,r) = (3,4,5)$.  On the right: pattern of
    positive signs (orange) and negative signs (black) on the bottom
    layer of the cube.}
  \label{61K}
\end{figure}

For instance, the path from $f$ back up to $w$ goes B, O, B, O (where
B denotes a black edge and O denotes an orange edge), so its sign is
$+$ after four flips. The red path from $f'$ back up to $w'$ goes B,
B, O, B, so it gets $-$ after one flip. The blue path (O B O B) gets
$+$ (zero flips), and the green path (O B B B) gets $-$ (one flip).

To see why these signs are correct, note that the orange pattern is
just the standard pattern of positive signs in a double complex. In
the full triple complex we are considering, the vertical arrows are
all negative and the pattern of positive horizontal signs switches
every vertical layer.  But every time we eliminate a vertical arrow in
a column simplification, the negative sign on the arrow cancels the
negative sign from the Gaussian elimination. Thus the sign on the
resulting arrow is just the product of the signs on its horizontal
components, and this pattern continues to hold even after many
cancellations. Hence the signs on the paths are as described.

\begin{remark}\label{cubediffer} Note for future purposes that when
two paths ``differ by a cube'', as the red/blue paths or the
blue/green paths are related, then they have the opposite sign. To see
why this is true, color everything with an orange/black pattern which
flips every level, so sign flips are always black edges (as mentioned
above). Consider (e.g.) the cube spanned by the red and blue
paths. and the squares on the top and bottom of this cube. These
squares each have a ``source'' vertex with two outgoing edges and a
``sink'' vertex with two incoming edges. Since the pattern is such
that every square has an odd number of orange/black edges, the
outgoing edges match color precisely when the incoming edges
don't. Furthermore, the outgoing edges match color on the top iff the
outgoing edges match on the bottom, and the same is true for incoming
edges.

Now, the difference between the red and blue paths has two outgoing
edges, two incoming edges, and two vertical edges which are
positive. If the outgoing edges match, the incoming edges don't, so
the total sign difference is $-1$, and similarly if the incoming edges
match. Thus any paths differing like the red and blue paths must
differ in sign.
\end{remark}

\subsubsection{Cancelling more differentials.}  Figure~\ref{7K} shows
the complex of Figure~\ref{6K} from the top and rotated a bit
clockwise. For now we will assume $p + 2 \leq q$, $r$; this is the
generic case. Later we can look back and see what happens when $q = p$
or $q = p+1$ (without loss of generality we may assume $q \leq r$).

No paths are long enough to hit the floor generators circled with a 1
in Figure~\ref{7K}, so these generators survive to homology. There are
$(q-p-1)(r-p-1)$ of them. We want to determine which other floor
generators survive to homology.

\begin{figure}
\labellist \small \hair 2pt \pinlabel (1) at 25 89 \pinlabel (2) at 25
136 \pinlabel (3) at 179 30 \pinlabel (4) at 174 106 \pinlabel (5) at
107 133 \pinlabel (6) at 153 58 \pinlabel $f_1$ at 131 86 \pinlabel
$f_2$ at 171 131 \pinlabel $w_1$ at 138 205 \pinlabel $w_3$ at 162 205
\pinlabel $w_5$ at 188 205 \pinlabel $w_6$ at 253 143 \pinlabel $w_4$
at 253 118 \pinlabel $w_2$ at 253 93 \pinlabel wall at 125 220
\pinlabel wall at 295 106 \pinlabel $r+1$ at 125 265 \pinlabel $q+1$
at 372 106 \endlabellist \centering \includegraphics{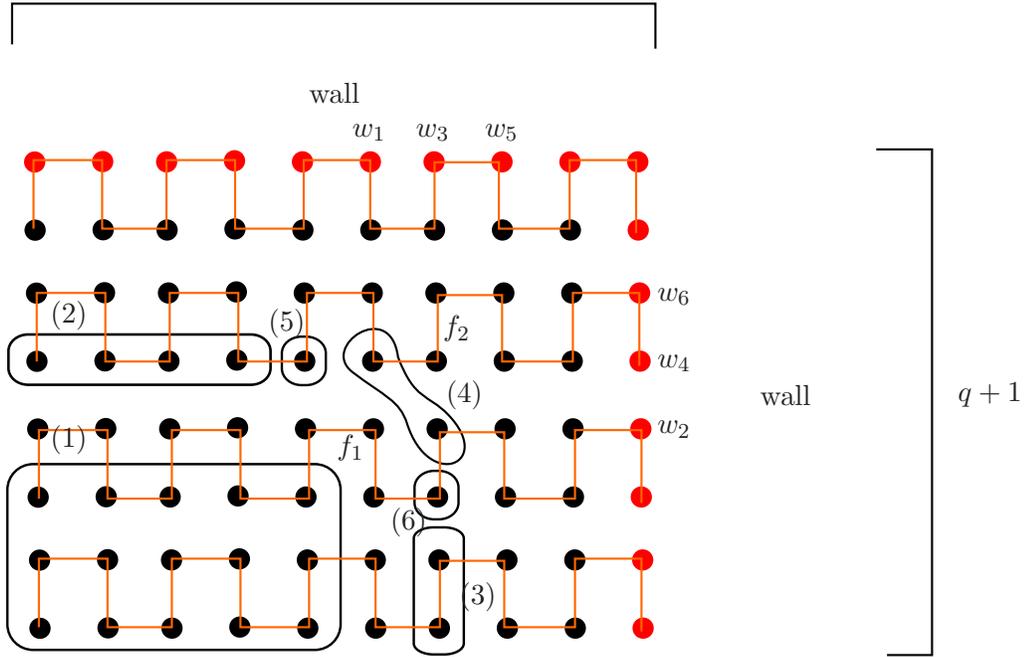}
  \caption{Top-down view of the remaining generators, for $(p,q,r) =
    (3,7,9)$. The pattern of positive signs on the bottom layer is
    also shown (in orange) for convenience.}
  \label{7K}
\end{figure}

First, look at the blue differentials in Figure~\ref{6K}. No other
arrows have the same tip as these, so they may all be cancelled
without picking up extra zig-zag maps. The purple arrows are almost
like the blue ones, except for the presence of black arrows with the
same tip as the purple ones. If we wanted, we could still cancel these
and pick up some extra maps, but instead we will leave them be for
now; later we will be able to cancel them without extra maps.

After cancelling the blue arrows, nearly all of the second-to-top-row
wall generators are gone. Now consider the uncircled dots in the
leftmost column of Figure~\ref{7K}. Each gets matched up with a wall
generator in the leftmost wall column, via arrows like the three red
arrows on the far left of Figure~\ref{6K}. Each arrow counts only one
path, so it is automatically $\pm \id$. The leftmost dot lying in the
circle ``2'' would get matched with the missing generator from the
second-to-top row. Cancelling arrows, we see that all the uncircled
dots die but the circled dot survives to homology. No extra maps are
generated because no other arrows share a tail with any of the
cancelled arrows.

Having dealt with the leftmost column, we move inward and do the same
thing with the next one. All the dots marked 2 (resp. 3) survive to
homology, and uncircled dots in their columns (resp. rows) do not.

With the dots marked 5 and 6, all we can say right now is that they
are potentially homology generators. To see why, look at the dot
marked $f_1$. This dot is the first to get hit by two wall generators
$w_1$ (from the north wall) and $w_2$ (from the east wall), i.e. it is
the first place where our cancellation process runs into
complications.

To determine what happens here, we want to identify the maps from
$w_1$ to (5) and from $w_2$ to (6). The parity of $p$ becomes
important here:
\begin{proposition}\label{mapprop} If $p$ is even, the maps $w_1
\rightarrow$ (5) and $w_2 \rightarrow$ (6) are zero. If $p$ is odd,
these maps are $\pm \id$.
\end{proposition}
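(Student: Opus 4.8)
The maps in question are components of the differential of the complex in Figures~\ref{6K}--\ref{7K}, so by the path description of that differential they are signed counts of paths: the $w_1 \to$ (5) map sends $w_1$ to $(\text{signed count of paths from }w_1\text{ to the relevant floor generator of }(5))\cdot\id$, and similarly for $w_2 \to$ (6). Since the pretzel symmetry $q \leftrightarrow r$ interchanges the north and east walls while preserving all the simplifications we have made, it suffices to analyze $w_1 \to$ (5); the statement for $w_2 \to$ (6) follows verbatim.

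\textbf{Step 1: enumerate the paths.} First I would fix coordinates on the cube of Figure~\ref{4K} and locate precisely, after the delooping of wall tops and the cancellations of the blue/red/purple arrows, the generator $w_1$ and the floor generator(s) of region (5) that $w_1$ can reach. A path from $w_1$ to (5) alternates positive horizontal steps with vertical steps downward through the $p$-direction, and must descend the full height of the north wall before (or while) turning onto the floor; its combinatorial type is therefore governed by a choice, level by level in the $p$-direction, of whether the path is still travelling along the wall or has already turned. The outcome I expect is that there are on the order of $p$ such paths, and in any case that the paths can be organized so that all but at most one of them is cancelled in pairs (see Step 2), the number left over being $1$ when $p$ is odd and $0$ when $p$ is even. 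One also checks here that none of these paths uses a dotted component produced by delooping a wall top: as observed in the ``Paths'' discussion, such a step would eventually deposit a dot on the basepoint component and kill the map, but this does not occur because $w_1$ is an honest (undotted) wall generator, not a dotted top vertex.

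\textbf{Step 2: evaluate the signed sum.} Here I would apply Remark~\ref{cubediffer}: two paths that differ by a single elementary cube carry opposite signs. The paths from $w_1$ to (5) can be listed in order (by the level at which the path leaves the wall) so that consecutive paths differ by exactly one cube; their signs then alternate, so the signed count telescopes to $\pm 1$ when the number of paths is odd and to $0$ when it is even. Combining with the parity count of Step~1 gives the proposition.

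\textbf{Expected main obstacle.} The sign analysis of Step~2 is essentially routine once the path set is correctly described, so the real work is in Step~1: correctly locating $w_1$ and region (5) after all the rounds of delooping, Gaussian elimination, and arrow-cancellation, and thereby getting the exact set of connecting paths and the parity of its size. As an alternative that sidesteps some of this bookkeeping, one can isolate the sub-configuration consisting of the north wall together with the adjacent strip of floor generators: no path can leave this sub-configuration, so the map $w_1 \to$ (5) is computed entirely inside it, and this sub-configuration is itself the simplified complex (via Lemma~\ref{inductive} and Corollary~\ref{mirror}) of a single $(-p)$-half-twisted region, whose differential can be read off directly and exhibits exactly the $p \bmod 2$ dichotomy claimed.
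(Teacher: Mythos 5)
Your proposal matches the paper's proof in its essentials: both enumerate the paths from $w_1$ to region (5), observe that there are exactly $p$ of them (you hedge with ``on the order of $p$,'' but the parity argument you describe is what the paper uses), order them so that consecutive paths differ by a single elementary cube, and invoke Remark~\ref{cubediffer} to conclude that the signs alternate, giving a signed sum of $0$ when $p$ is even and $\pm 1$ when $p$ is odd. The paper's proof is simply a compressed version of your Step~2 applied to the path count you anticipate in Step~1.
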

\begin{proof} Without loss of generality, consider $w_1 \rightarrow$
(5). There are $p$ paths contributing to this matrix coefficient, and
  they are arranged in a sequence like the red/blue/green paths in
  Figure~\ref{61K}. As noted in Remark~\ref{cubediffer}, since each
  path in the sequence differs from the next by a cube, the signs
  alternate along the sequence. Hence the total sum is $0$ if $p$ is
  even and $\pm 1$ if $p$ is odd.
\end{proof}

If $p$ is even, we can choose (arbitrarily) to cancel the arrow from
$w_1$ to $f_1$, and we pick up no extra maps from $w_2$ to (5) or
(6). Hence both (5) and (6) survive to homology. If $p$ is odd, we can
still cancel the same arrow; we get an additional map from $w_2$ to
(5), but this just means $w_2$ dies in homology and a rank 1 summand
of $\langle$(5), (6)$\rangle$ survives.

The picture looks similar if we move one step up and to the right. If
$p$ is even, then the wall generator $w_3$ hits only one of the dots
in circle 4, and $w_4$ hits the other one. The maps have coefficient
$\pm 1$, so neither dot in (4) survives to homology.

If $p$ is odd, we have to be careful with signs. Using the sign
algorithm discussed above, the path from $w_3$ to the bottom dot in
(4) gets $p+1$ plus signs, for a net $+$. The path from $w_4$ to the
left dot in (4) gets $p+1$ minus signs, for a net $+$ as well. The
paths from $w_3$ to the left dot in (4) have signs $-$, $+$, $\ldots$,
$-$, so the net sign is $-$. Finally, the paths from $w_4$ to the
bottom dot in (4) also have signs $-$, $+$, $\ldots$, $-$ for a net
$-$. To make it easier for the reader to follow these calculations,
the sign pattern is overlaid in Figure~\ref{7K} for convenience (only
the orange edges are shown, to avoid cluttering the diagram). The
picture looks qualitatively the same for all odd $p$, so checking what
happens when $p = 3$ yields the general pattern.

Now that we know these signs, the homology is easy; the wall
generators contribute a $\Z$ summand, and the floor generators
contribute $\frac{\Z^2}{(1,-1)} \cong \Z$.

Finally, consider the dot marked $f_2$. If $p$ is odd, this dot gets
hit by both $w_5$ and $w_6$, so it does not survive to
homology. However, if $p$ is even, the maps from $w_5$ and $w_6$ to
$f_2$ are zero, so $f_2$ survives to homology.

The rest of the floor generators do not contribute (generators at the
bottoms of the walls are excluded from being floor
generators). Indeed, the process of cancelling these generators using
red arrows from the walls proceeds without needing to consider
non-straight-line paths.

At this point, we can look back at the purple arrows on the walls in
Figure~\ref{6K}; they no longer share tails with any other nonzero
arrows, so we can cancel them just like we did the blue arrows.

\subsubsection{Remaining cases.}  Counting everything up, we have
found $(q-p)(r-p)$ surviving floor generators if $p$ is even and
$(q-p)(r-p)-1$ if $p$ is odd. We obtained this formula assuming $p + 2
\leq q$, $r$, but at this point it is not too difficult to look back
and see what happens in the remaining cases.

First suppose $q = p + 1$. In Figure~\ref{7K}, the missing
``features'' are the dots circled 1, 6, and 3. If $p$ is even, the dot
circled 5 survives to homology, and there are $r-p$ surviving floor
generators in total. This number agrees with our existing formula, so
we do not need to modify it.

If $p$ is odd, (5) does not survive to homology. There are $r - p - 1$
surviving floor generators if $r > p + 1$, and $1$ if $r = p +
1$. Thus the only case where the formula needs modification is when $r
= p + 1$; there the formula would give $0$ surviving generators
instead of the correct number $1$.

Now suppose $q = p$. If $p$ is odd, our formula predicts $-1$ floor
generators, so clearly it needs modification. But each circled floor
generator gets hit by a corresponding wall generator, by an arrow
parallel to the one from $w_1$ to (5). These arrows are nonzero by the
argument of Proposition~\ref{mapprop}, and they can be cancelled one
at a time. Hence no floor generators survive to homology, and we just
need to change the $-1$ to a $0$.

If $p$ is even, the formula requires more drastic modification. The
quantity $(r-p)(q-p)$ is zero, but the generators (2), (5), and $f_2$
survive to homology, for a total of $1$ if $r = q = p$ and $r - p$ if
$r > q - p$.

\subsection{Relative gradings.}  In fact, the floor generators we have
found in homology are all in the lowest possible $\delta$-grading of
$Kh(P(-p,q,r))$, and they comprise the entire homology in this
grading. We can see this by analyzing the relative gradings of various
generators in the complex.

Any time we deloop, the two resulting summands differ by $2$ in the
$q$-grading; the summand without the dot is $2$ steps higher. The
summands have the same homological grading. Also, nonzero components
of the differential must preserve the $q$-grading and increase the
homological grading by one. These two facts will be all we need to
determine the relative gradings of the generators. The second fact
holds before and after any eliminations; we will be sloppy and not
always identify which stage of the elimination process contains the
nonzero component in the differential (it should be clear from
context).

In terms of the grading $\delta = q/2 - h$, two summands from a
delooping differ in $\delta$-grading by 1, and the differential
decreases $\delta$-grading by 1. For simplicity, we will focus on the
$\delta$-grading here.

In the internal columns (right side of Figure~\ref{5K}), there are
only two $\delta$-gradings, say $g$ and $g+1$; the generators on the
left have grading $g+1$, and those on the right have grading $g$. This
observation follows immediately from the two properties stated above;
note that $g$ is the same from column to column because of the
differential components connecting the columns. All the floor
generators (i.e. those surviving after cancellation of the identity
maps in the column) have $\delta$-grading $g$. In particular, this is
true for the homology generators we have found so far.

Now look at generators on the walls (except those in the column where
the walls intersect). All wall generators not on top of the walls have
nonzero components of their differential on floor generators (because
of the zig-zag maps). Hence all these generators must lie in
$\delta$-grading $g+1$. The top nodes on the walls contribute two
generators each; the higher one's differential hits a floor generator,
so the $\delta$-gradings of these two generators are again $g$ and
$g+1$. Note that all the ones in grading $g$ do not survive to
homology (they die when the blue and purple differentials in
Figure~\ref{6K} are cancelled).

Finally, in the column where the walls intersect, all nodes contribute
two generators except the top node which contributes four. In the
two-generator nodes, the $\delta$-gradings are $g+2$ and $g+1$ because
the black arrows are nonzero. In the four-generator node, the gradings
are $g+2$, $g+1$, and $g$, and the generator in grading $g$ gets
cancelled with the blue arrows.

We have now isolated the homology in $\delta$-grading $g$; except in
the special cases, it is $\Z^{(q-p)(r-p)}$ if $p$ is even and
$\Z^{(q-p)(r-p)-1}$ if $p$ is odd. It is also not hard to see what
happens between gradings $g+2$ and $g+1$. Consider the grading-$g+2$
generators in the two-generator nodes of Figure~\ref{6K}. Each has two
nonzero black arrows pointing to generators on the walls. As long as
at least one of these wall generators still exists on each level after
the cancellations we have done, the grading-$g+2$ generators do not
survive to homology. But all of the wall generators still exist except
for one of the two on the very bottom level (depending on which one we
chose to cancel). So we can cancel each grading-$g+2$ generator with a
wall generator using a black arrow.

All the remaining generators are in $\delta$-grading $g+1$, so we can
conclude our complex has no more differentials. We just need to count
generators to determine the rest of the homology; at no point has any
torsion appeared, so the homology is free. The count of generators is
easy if we only care about the $\delta$-graded theory; for the
bigradings, we refer the reader to the appendix.

To count the generators in grading $g+1$, first consider the generic
case ($p + 2 \leq q$, $r$). Note that we started with $qr + q + r + 1$
generators in grading $g$ ($qr$ from the floor generators and $q + r +
1$ from the top layer). There were $(p+1)(r+1) + (p+1)(q+1) - p$ wall
generators in grading $g+1$ and $p+1$ generators in grading $g+2$.  If
$p$ is even, $(q-p)(r-p)$ of the floor generators survive to homology,
so $(qr + q + r + 1) - (q-p)(r-p)$ cancellations occurred between
gradings $g+1$ and $g$. None of the grading-$g+2$ generators survive,
so $p+1$ cancellations occurred between gradings $g+2$ and $g+1$. Thus
the remaining number of generators in grading $g+1$ is
\begin{align*} (p&+1)(r+1) + (p+1)(q+1) - p - ((qr + q + r + 1) -
(q-p)(r-p)) - (p + 1) \\ &= p^2,
\end{align*} after some simplifying. If, instead, $p$ is odd, then one
fewer floor generator survives to homology. Hence there is one more
cancellation between gradings $g+1$ and $g$, and only $p^2 - 1$
generators remain in grading $g+1$.

In summary, in the generic case, if $p$ is even the $\delta$-graded
homology is $\Z_{(g)}^{(q-p)(r-p)} \oplus \Z_{(g+1)}^{p^2}$. If $p$ is
odd, it is $\Z_{(g)}^{(q-p)(r-p)-1} \oplus \Z_{(g+1)}^{p^2-1}$.

For the special cases, if $p$ is odd and $q = r = p+1$, one extra
floor generator survives to homology. Hence we also have an extra
generator in grading $g+1$, and the homology is $\Z_{(g)}^{(q-p)(r-p)}
\oplus \Z_{(g+1)}^{p^2}$ (like in the generic case of $p$ even), which
simplifies to $\Z_{(g)} \oplus \Z_{(g+1)}^{p^2}$.

If $p$ is odd and $q = p$, the answer is similar. Since the generic
formula undershot the homology in grading $g$ by $1$ (by having $-1$
rather than $0$), it also undershot the homology in grading $g+1$ by
$1$. So the homology is $\Z_{(g+1)}^{p^2}$.

If $p$ is even and $q = p$, we understated the homology in grading $g$
by a larger amount: $r - p$ for $r > p$ or $1$ for $r = p$. Hence, if
$r > p$, the homology is $\Z_{(g)}^{r-p} \oplus
\Z_{(g+1)}^{p^2+r-p}$. If $r = p = q$, it is $\Z_{(g)} \oplus
\Z_{(g+1)}^{p^2+1}$

\subsection{Absolute gradings.}\label{absolute} All we need to do to
finish the $\delta$-graded computation is to identify $g$. We will do
this by computing the grading of the grading-$g$ generator in the top
(4-generator) node in Figure~\ref{6K}. No deloopings were performed in
the making of this generator, so its $q$-grading is not affected by
any of the delooping shifts. Its dot-degree is $-2$, and its pattern
of crossing resolutions is $[1\ldots1,0\ldots0,0\ldots0]$ which has
weight $p$. Hence its $q$-grading is $-2 + p + n_+ - 2n_-$. The
numbers $n_+$ and $n_-$ of positive and negative crossings can be
computed from the orientation of the link and Table~\ref{nplusnminus}.

Similarly, the homological grading of this generator is $p -
n_-$. Hence its $\delta$-grading $g$ is $q/2 - h = (-2 - p +
n_+)/2$. We have completed the computation of the $\delta$-graded
Khovanov homology of $P(-p,q,r)$, which we state as a theorem (to be
compared with Theorem~\ref{maintheorem}):

\begin{theorem}\label{maintheorem2} Let $p$, $q$, and $r$ be positive
integers with $p \leq q \leq r$. Let $H_{\delta}$ be the reduced
Khovanov homology of $P(-p,q,r)$ in grading $\delta$ (with basepoint
chosen as in Figure~\ref{3K}).
\begin{itemize}
\item If $p$ is even and $p+2 \leq q$, $r$, then $H_{\frac{-p +
    n_+}{2}} = \Z^{p^2}$ and $H_{\frac{-2-p+n_+}{2}} =
  \Z^{(q-p)(r-p)}$. All other $H_{\delta}$ are 0.
\item If $p$ is odd and $p+2 \leq q$, $r$, then $H_{\frac{-p +
    n_+}{2}} = \Z^{p^2 - 1}$ and $H_{\frac{-2-p+n_+}{2}} =
  \Z^{(q-p)(r-p) - 1}$. All other $H_{\delta}$ are 0.
\item If $p$ is odd and $q = r = p+1$, then $H_{\frac{-p + n_+}{2}} =
  \Z^{p^2}$ and $H_{\frac{-2-p+n_+}{2}} = \Z$. All other $H_{\delta}$
  are 0.
\item If $p$ is even, $q = p$, and $r > p$, then $H_{\frac{-p +
    n_+}{2}} = \Z^{p^2 + r - p}$ and $H_{\frac{-2-p+n_+}{2}} =
  \Z^{r-p}$. All other $H_{\delta}$ are 0.
\item If $p$ is even and $p = q = r$, then $H_{\frac{-p + n_+}{2}} =
  \Z^{p^2+1}$ and $H_{\frac{-2-p+n_+}{2}} = \Z$. All other
  $H_{\delta}$ are 0.
\item If $p$ is odd and $q = p$, then $H_{\frac{-p + n_+}{2}} =
  \Z^{p^2}$. All other $H_{\delta}$ are 0.
\end{itemize} The values of $n_+$ and $n_-$ depend on the orientation
of the link $P(-p,q,r)$ and can be computed from
Table~\ref{nplusnminus}.
\end{theorem}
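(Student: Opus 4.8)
The plan is to assemble the local computations carried out above into the global statement. First I would replace each of the three twisted strands in the diagram $D$ of $P(-p,q,r)$ (Figure~\ref{3K}) by its simplified dotted-cobordism complex: Lemma~\ref{inductive} for the positive $q$- and $r$-strands, and Corollary~\ref{mirror} for the negative $p$-strand. Forming the Khovanov cube of the resulting tangle composite yields a complex homotopy equivalent to $\Khr(P(-p,q,r))$ with $(p+1)(q+1)(r+1)$ generators, organized as the cube of Figure~\ref{4K}; the basepoint on the indicated component kills exactly the cube edges that put a dot on that component.

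Next I would simplify this cube. Viewing it as a stack of vertical $p$-columns, I deloop each complete circle and run Gaussian elimination on the $-\id$ edges inside every non-back-wall column (Figure~\ref{5K}), producing the ``two walls and a floor'' complex of Figure~\ref{6K}, whose surviving differentials are signed counts of the alternating horizontal/vertical paths of the Paths subsection, with signs governed by the orange/black pattern of Figure~\ref{61K} and Remark~\ref{cubediffer}. I then cancel differentials greedily: the blue (and later purple) wall arrows cancel with no zig-zags; the red wall-to-floor arrows cancel the uncircled floor generators in the outer rows and columns; and at the collision floor generators $f_1,f_2$ the outcome is dictated by Proposition~\ref{mapprop} — if $p$ is even the colliding maps vanish and the extra generators survive, while if $p$ is odd they are $\pm\id$ and a rank-one summand dies. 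Checking the signs at the generators circled $(4)$ (it suffices to do $p=3$, since the pattern is uniform in odd $p$) shows that pair contributes $\Z^2/(1,-1)\cong\Z$. Counting gives $(q-p)(r-p)$ surviving floor generators for $p$ even and $(q-p)(r-p)-1$ for $p$ odd in the generic range $p+2\le q,r$, and I would then revisit the degenerate ranges $q=p+1$ and $q=p$ exactly as in the Remaining Cases subsection to obtain the corrected floor counts appearing in the theorem.

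Then I would pin down the gradings. Using only that delooping splits a generator into two summands differing by $1$ in $\delta$ and that the differential drops $\delta$ by $1$, one sees the whole simplified complex lies in at most three consecutive $\delta$-gradings $g,g{+}1,g{+}2$: floor generators and the top-of-wall undotted survivors in grading $g$, the bulk of the wall generators in $g{+}1$, and the intersection-column generators in $g{+}1$ and $g{+}2$ (with one stray grading-$g$ generator that gets cancelled). Since no grading-$g{+}2$ generator survives (each cancels against a wall generator via a black arrow) and no torsion ever appears in the eliminations, the homology is free and concentrated in gradings $g$ and $g{+}1$; the grading-$g$ rank is the surviving-floor count above, and the grading-$g{+}1$ rank follows from the Euler-characteristic bookkeeping of the Relative Gradings subsection ($p^2$ generically for $p$ even, $p^2-1$ generically for $p$ odd, with the modified values in the special cases). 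Finally I identify $g$ via Section~\ref{absolute}: the grading-$g$ survivor in the top $4$-generator node has dot-degree $-2$, resolution weight $p$, and homological degree $p-n_-$, so $g=q/2-h=(-2-p+n_+)/2$, giving the labels $\frac{-p+n_+}{2}$ and $\frac{-2-p+n_+}{2}$ in the statement, with $n_+,n_-$ read off from the orientation via Table~\ref{nplusnminus}.

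The main obstacle, and the part demanding the most care, is the sign analysis: showing the many paths feeding a single matrix coefficient cancel in pairs ``differing by a cube'' (Remark~\ref{cubediffer}) so that Proposition~\ref{mapprop} applies, and correctly propagating the alternating orange/black sign pattern through repeated Gaussian eliminations so the collisions at $f_1,f_2$ and at the generators circled $(4)$ resolve as claimed. The other delicate point is the case analysis for $q=p$ and $q=p+1$ and its interaction with the parity of $p$, since the generic formulas $(q-p)(r-p)$ and $p^2$ must be patched by hand to produce the six cases listed.
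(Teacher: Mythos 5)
Your proposal follows the paper's proof essentially step for step: the same reduction via Lemma~\ref{inductive} and Corollary~\ref{mirror}, the same column-by-column delooping and Gaussian elimination into the ``walls and floor'' complex, the same path/sign analysis (orange/black pattern, Remark~\ref{cubediffer}, Proposition~\ref{mapprop}), the same generic and degenerate generator counts, the same two-step relative-grading argument, and the same absolute-grading computation $g = (-2-p+n_+)/2$ from Section~\ref{absolute}. One small slip in the relative-gradings paragraph: the top-of-wall generators in $\delta$-grading $g$ are the \emph{dotted} ones (the dot lowers the $q$-grading and hence lowers $\delta$), and they do \emph{not} survive to homology (they cancel against the blue and purple arrows) --- a point you implicitly handle correctly by taking the grading-$g$ rank to be the surviving floor count.
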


\section{Appendix.}  Here we wrap up some loose ends, computing the
bigraded homology and making some remarks on the unreduced version.

\subsection{Bigradings.}  We already have a reference point for the
absolute bigradings, computed in Section~\ref{absolute}. We only need
to compute the relative bigradings. Since we know the
$\delta$-gradings, we may focus on the homological grading and use it
to compute the $q$-grading later. This grading is actually much
simpler to compute; every ``forward'' step in the cube (vertically
upwards or horizontally out) increases homological grading by 1,
regardless of whether there are nonzero differentials connecting the
generators. So we can just look through the homology generators we
have found, note where they are in the cube, and deduce their
homological gradings.

For the lower $\delta$-grading $(-2-p+n_+)/2$, all the homology
generators are on the floor of the cube in Figure~\ref{6K}. Hence the
homological gradings correspond to the diagonals of slope $-1$ in the
square of Figure~\ref{7K}. The dot marked $f_2$ has homological
grading $p$ more than the homological grading of the generators we
considered before, which was $p - n_-$. Hence we can start at $f_2$,
with $h$-grading $2p - n_-$, and count dots in subsequent lower
diagonals to get the homological grading of generators in this
$\delta$-grading. This grading can be used along with $\delta$ to
obtain the $q$-grading; in particular, the $q$-grading of $f_2$ is
$2(\delta + h) = -2 - p + n_+ + 4p - 2n_-$, which simplifies to $-2 +
3p + n_+ - 2n_-$.

For convenience, we will write down the formulas more explicitly. Each
of the circled groups of dots in Figure~\ref{7K} potentially
contributes a term to the Khovanov homology, which we will describe by
its Poincar{\'e} polynomial $P_{Kh}$ (over $\Q$, say) because it is
free. We will write the formal variables in this Poincar{\'e}
polynomial by $Q$ and $H$, to avoid confusion with the $q$ we already
have. Let $\phi_{p,q,r}(x) = \sum_{n=0}^{(r-p)+(q-p)-4} c_n x^n$,
where the sequence $c_n$ is defined by the pattern
\begin{align*} (1,&2,3,\ldots,(q-p-2), \\
&(q-p-1),(q-p-1),\ldots,(q-p-1), \\ &(q-p-2),\ldots, 2, 1)
\end{align*} and there are $r-q+1$ instances of $q-p-1$ on the middle
line.

\begin{proposition}\label{floorgens} The dots in Figure~\ref{7K}
represent the following possible contributions to the polynomial
$P_{Kh}(P(-p,q,r))$; whether these summands appear in the formula
depends on whether the dots survive to homology. (Here, and in the
rest of the paper, sums indexed from $0$ to a negative number should
be interpreted as empty.)
\begin{itemize}
\item The dots in circle (1) contribute \[a_1 := Q^{6 + 3p + n_+ -
  2n_-}H^{2p - n_- + 4}\phi_{p,q,r}(Q^2 H)\]
\item The dots in circle (2) contribute \[a_2 := Q^{4 + 3p + n_+ -
  2n_-}H^{2p - n_- + 3}\sum_{n = 0}^{r-p-3}(Q^2 H)^n.\]
\item The dots in circle (3) contribute \[a_3 := Q^{4 + 3p + n_+ -
  2n_-}H^{2p - n_- + 3}\sum_{n = 0}^{q-p-3}(Q^2 H)^n.\]
\item The dots in circle (4) each contribute $Q^{3p + n_+ - 2n_-}H^{2p
  - n_- + 1}$.
\item The dots in circles (5) and (6) each contribute $Q^{2 + 3p + n_+
  - 2n_-}H^{2p - n_- + 2}$.
\item The dot $f_2$ contributes $Q^{-2 + 3p + n_+ - 2n_-}H^{2p -
  n_-}$.
\end{itemize}
\end{proposition}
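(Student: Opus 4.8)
The plan is to reduce the proposition to bookkeeping about homological gradings only, since the $\delta$-gradings of the floor generators have already been determined. By the analysis in Section~\ref{absolute}, every floor generator in Figures~\ref{6K} and \ref{7K} lies in the single $\delta$-grading $g = (-2-p+n_+)/2$. Since $\delta = q/2 - h$, a floor dot in homological grading $h$ then has quantum grading $q = 2(g+h) = -2-p+n_++2h$, so its entire bidegree is determined by $h$ alone. As observed at the start of the appendix, $h$ is purely combinatorial: each ``forward'' step in the cube — one step vertically upward, or one step horizontally outward — raises $h$ by $1$, whether or not the associated component of the differential survives. Hence it is enough to fix the homological grading of one reference floor dot and then, for each circled region of Figure~\ref{7K}, count the number of forward steps separating it from the reference.

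For the reference I would take $f_2$. It was recorded just above that the grading-$g$ generator of the top $4$-generator node of Figure~\ref{6K}, sitting at the cube corner $[1\ldots1,0\ldots0,0\ldots0]$, has $h = p - n_-$ and $q = -2+p+n_+-2n_-$, and that $f_2$ lies $p$ forward steps past it, so $h(f_2) = 2p - n_-$ and $q(f_2) = -2+3p+n_+-2n_-$. That is the last bullet. By direct inspection of Figure~\ref{7K}, the dots of circle (4) lie one forward step beyond $f_2$, those of circles (5) and (6) lie two steps beyond, those of the strips (2) and (3) three steps beyond, and those of the block (1) four steps beyond. Since one forward step raises $h$ by $1$ and hence $q$ by $2$, this accounts for the uniform prefactors $Q^{3p+n_+-2n_-}H^{2p-n_-+1}$ on circle (4), $Q^{2+3p+n_+-2n_-}H^{2p-n_-+2}$ on circles (5) and (6), $Q^{4+3p+n_+-2n_-}H^{2p-n_-+3}$ on circles (2) and (3), and $Q^{6+3p+n_+-2n_-}H^{2p-n_-+4}$ on block (1).

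The one substantive step is then to organize the dots within circles (1), (2) and (3) by homological grading. Within the floor, dots of a fixed $h$ are exactly those on a common slope-$(-1)$ diagonal, and passing to the next diagonal multiplies the bidegree by $Q^2H$. So a strip of $m$ consecutive floor dots along a row or a column contributes $\bigl(\sum_{n=0}^{m-1}(Q^2H)^n\bigr)$ times the bidegree of its first dot; identifying circles (2) and (3) as the flanking strips of lengths $r-p-2$ and $q-p-2$ gives $a_2$ and $a_3$. Identifying circle (1) as a rectangular block of floor dots of dimensions $(q-p-1)\times(r-p-1)$ — whose total count $(q-p-1)(r-p-1)$ was already checked — and slicing it by slope-$(-1)$ diagonals (here one uses $q \le r$) produces diagonal sizes $1,2,\ldots,q-p-2$, then $q-p-1$ repeated $r-q+1$ times, then $q-p-2,\ldots,2,1$, which is exactly the coefficient sequence $c_n$ defining $\phi_{p,q,r}$; attaching the bidegree of the block's corner dot gives $a_1$. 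The ``whether these summands appear'' hedge requires nothing new — it is precisely the survival analysis already carried out (block (1) always survives; (5), (6) and $f_2$ survive iff $p$ is even; the pair in (4) contributes a single $\Z$ when $p$ is odd; and so on).

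The main obstacle is the coordinate bookkeeping in the last step: pinning down the positions of $f_2$ and of the rectangular block (1) inside the labelled cube (keeping track of the delooping $q$-shifts and the negative-twist convention on the $p$-strand), and verifying that the diagonal slicing of a $(q-p-1)\times(r-p-1)$ block produces exactly the flat run of length $r-q+1$. Since $\delta$ is already constant and explicitly known on the entire floor, and no torsion ever appeared, there are no remaining sign, extension, or degeneration subtleties — the proposition amounts to a labelled-cube computation followed by one elementary lattice-point count.
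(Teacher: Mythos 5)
Your argument is correct and matches the paper's own (informal) reasoning: the paper likewise fixes the reference bidegree of the corner generator from Section~\ref{absolute}, deduces $h(f_2) = 2p - n_-$ by counting $p$ forward steps, and then reads off the rest via slope-$(-1)$ diagonals on the floor of Figure~\ref{7K}. You simply make explicit the diagonal-slicing count of the $(q-p-1)\times(r-p-1)$ block that produces $\phi_{p,q,r}$ and the lengths $r-p-2$ and $q-p-2$ of the flanking strips, which the paper leaves as ``count dots in subsequent lower diagonals.''
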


\begin{figure}
\labellist \small \hair 2pt \pinlabel $w_1$ at 8 344 \pinlabel $w_3$
at 32 352 \pinlabel $w_5$ at 59 361 \pinlabel $w_2$ at 421 344
\pinlabel $w_4$ at 397 350 \pinlabel $w_6$ at 370 358 \pinlabel (1) at
231 416 \pinlabel (2) at 121 375 \pinlabel (3) at 273 385 \pinlabel
(4) at 93 208 \pinlabel (5) at 277 282 \pinlabel $f_2$ at 226 32
\pinlabel $f_1$ at 226 12 \endlabellist \centering
\includegraphics{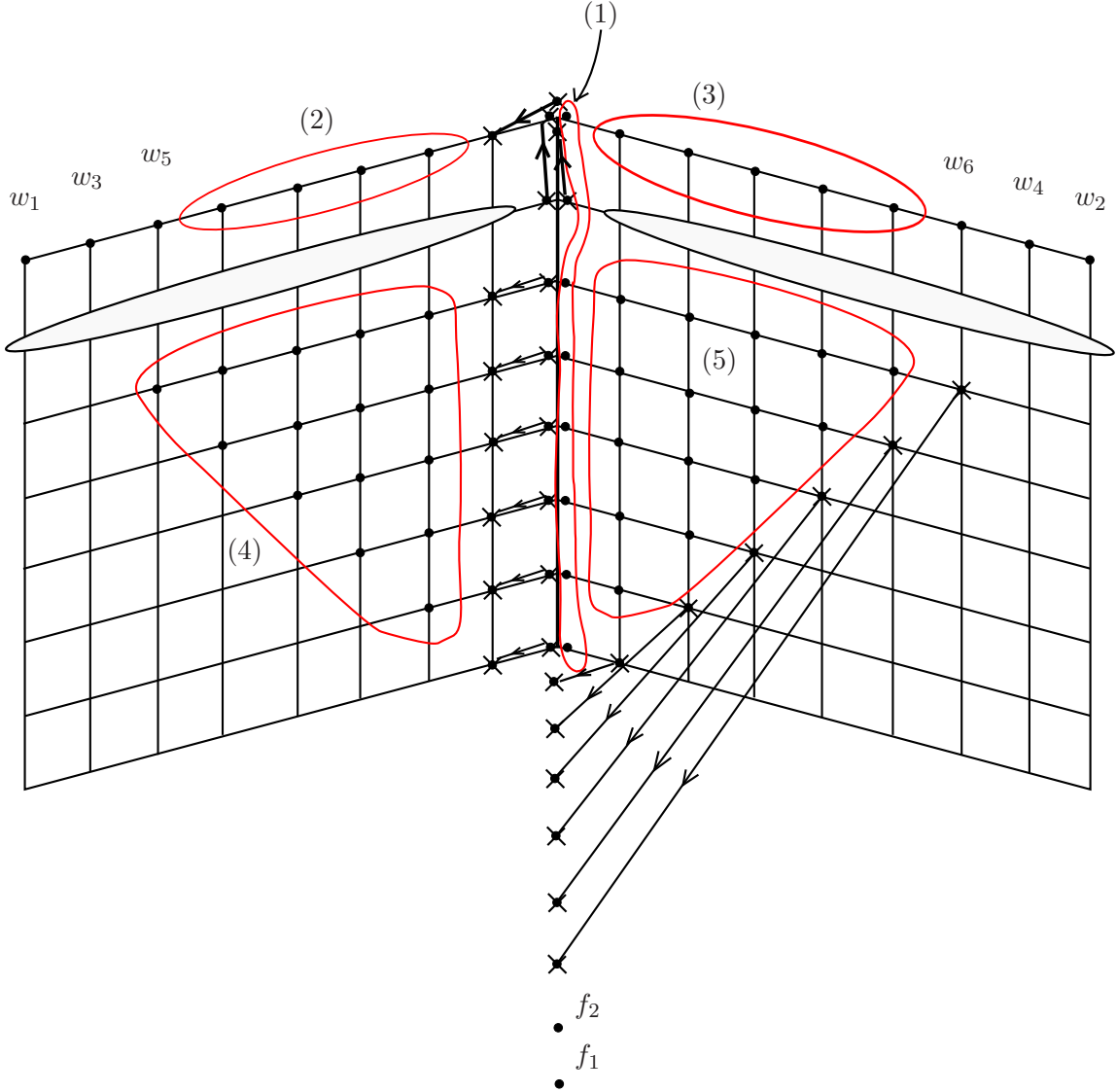}
  \caption{Generators remaining in $\delta$-grading $(-p+n_+)/2$.}
  \label{wallfig}
\end{figure}

Before we go through and total up the contributions for each case of
$P(-p,q,r)$, we will analyze the bigradings in the higher
$\delta$-grading $(-p+n_+)/2$. Figure~\ref{wallfig} shows the relevant
generators, after cancelling all arrows. Dots marked with an x have
been cancelled, while unmarked dots contribute to homology. Their
contributions to $P_{Kh}$ are summarized below; before stating the
formulas, though, we make another definition. Given a positive integer
$k$, let $\psi(k) = \sum_{n=0}^{2k} c_n x^n$, where $c_n$ is defined
by the pattern
\begin{align*} (1,1,2,2,\ldots,&(k/2 - 1), (k/2 - 1),\\ &k/2, k/2,
k/2, \\ &(k/2 - 1), (k/2 - 1), \ldots, 1,1)
\end{align*} if $k$ is even and
\begin{align*} (1,1,2,2,\ldots,&(k-1)/2, (k-1)/2,\\ &(k+1)/2, \\
&(k-1)/2, (k-1)/2, \ldots, 1,1)
\end{align*} if $k$ is odd.

\begin{proposition}\label{wallgens} The dots in Figure~\ref{wallfig}
represent the following contributions to the polynomial
$P_{Kh}(P(-p,q,r))$; for $w_1$ through $w_6$, whether their summands
appear depends on the parity of $p$ (with a few special cases).
\begin{itemize}
\item The dots in circle (1) contribute
\[ b_1 := Q^{- p + n_+ - 2n_-}H^{-n_-} \sum_{n=0}^{p-2} (Q^2 H)^n +
Q^{p + n_+ - 2n_-}H^{p-n_-}. \]
\item The dots in circle (2) contribute \[b_2 := Q^{4 + p + n_+ -
  2n_-}H^{2 + p -n_-} \sum_{n=0}^{p-4} (Q^2 H)^n.\]
\item The dots in circle (3) contribute \[b_3 := Q^{2 + p + n_+ -
  2n_-}H^{1 + p -n_-} \sum_{n=0}^{p-3} (Q^2 H)^n.\]
\item The dots in circle (4) contribute \[b_4 := Q^{6 - p + n_+ -
  2n_-}H^{3 -n_-} \psi_{p-2}(Q^2 H). \]
\item The dots in circle (5) contribute \[b_5 := Q^{4 - p + n_+ -
  2n_-}H^{2 -n_-} \psi_{p-2}(Q^2 H). \]
\item The dots $w_1$ and $w_2$ each contribute $Q^{2 + 3p + n_+ -
  2n_-} H^{1 + 2p - n_-}$.
\item The dots $w_3$ and $w_4$ each contribute $Q^{3p + n_+ - 2n_-}
  H^{2p - n_-}$.
\item The dots $w_5$ and $w_6$ each contribute $Q^{-2 + 3p + n_+ -
  2n_-} H^{-1 + 2p - n_-}$.
\end{itemize}
\end{proposition}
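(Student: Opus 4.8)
The plan is to read off the bigradings of the surviving generators in Figure~\ref{wallfig} directly from the cube structure, exactly as was done for the floor generators in Proposition~\ref{floorgens}. The key input is the observation already established in the discussion of relative gradings: every ``forward'' step in the original triple cube (a vertical step up in one of the three twisted-strand factors, which after simplification becomes a horizontal ``out'' step, or a genuine horizontal step) raises homological grading by exactly $1$, whether or not the corresponding component of the differential is nonzero. So I would first locate each of the dots $w_1,\dots,w_6$ and the circled families (1)--(5) in Figure~\ref{wallfig} as specific vertices of the cube, count how many forward steps separate each from the reference generator whose grading was pinned down in Section~\ref{absolute} (the grading-$g$ generator in the top node of Figure~\ref{6K}, with $h$-grading $p - n_-$ and $q$-grading $-2 + p + n_+ - 2n_-$, i.e.\ $\delta$-grading $g = (-p+n_+)/2$ after the shift), and thereby write down the homological grading $H$-exponent of each. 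Since all these generators lie in the single $\delta$-grading $(-p+n_+)/2$, the $Q$-exponent is then forced: $Q = 2(\delta + h)$, so the $Q$-exponent equals $-p + n_+ + 2h$ wherever the $H$-exponent is $h$. This is why each stated monomial has its $Q$- and $H$-exponents locked together as $Q^{(-p+n_+) + 2k}H^{k - \text{(shift)}}$ up to the common $-2n_-$ correction coming from the grading conventions.

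Next I would handle the ``shape'' factors $\psi_{p-2}(Q^2H)$ and the various truncated geometric series $\sum (Q^2H)^n$. These encode how the generators within a single circled family are distributed across homological (hence $\delta$-constant, so also $Q$-) gradings. For circles (2) and (3), the surviving dots form a single straight line of the square in Figure~\ref{7K}'s wall picture, so the count in successive diagonals is constant: hence the plain geometric sums $\sum_{n=0}^{p-4}(Q^2H)^n$ and $\sum_{n=0}^{p-3}(Q^2H)^n$, with lengths dictated by how far the wall extends ($p$ crossings in the $p$-strand, minus the dots already cancelled by blue/purple arrows). For circles (4) and (5), the surviving dots sit in a two-dimensional block (the ``wall $\times$ wall'' region), so the number of survivors on each anti-diagonal first grows, then plateaus, then shrinks --- precisely the trapezoidal coefficient pattern packaged in $\psi_{p-2}$; the parameter $p-2$ reflects the $p-1$ interior wall-columns on each side. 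The two-term formula for $b_1$ reflects circle (1) splitting into the length-$(p-1)$ straight part near the wall intersection (the $\sum_{n=0}^{p-2}(Q^2H)^n$ term) plus one ``corner'' generator at the very top (the $Q^{p+n_+-2n_-}H^{p-n_-}$ term), which is the delooped summand from the 4-generator node that survived.

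The main obstacle I expect is bookkeeping the offsets precisely: getting the starting exponents right for each family requires carefully tracking (i) how many delooping $q$-shifts were applied in forming each surviving generator --- recall each deloop splits a generator into pieces differing by $2$ in $q$, and only the no-dot piece survives at the walls' top node --- and (ii) exactly which dots in Figure~\ref{7K} got eliminated versus survived, which depends on the parity of $p$ via Proposition~\ref{mapprop} and the sign computations preceding it. I would pin the reference point down once using the no-delooping generator as in Section~\ref{absolute}, then move generator-by-generator, at each step accounting for any deloop shift and counting forward cube-steps, and cross-check the final $Q$-exponents against the rigid relation $Q = -p + n_+ + 2H$ forced by the single $\delta$-grading. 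The special cases ($q = p$, $q = p+1$, $q = r = p+1$) just truncate or delete some of the circled families, so once the generic formulas are verified the special cases follow by inspection. The verification is essentially a finite check for small $p$ (say $p = 3$ and $p = 4$) establishing the qualitative pattern, exactly as argued earlier in the paper, together with the length/shape counts above; I would present it at that level of detail rather than expanding every polynomial.
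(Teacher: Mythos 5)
Your approach matches the paper's own: Proposition~\ref{wallgens} is stated without a separate proof, and the proof of Theorem~\ref{maintheorembig} just says to look at the figures and count, relying exactly on the ingredients you cite (the forward-step rule for homological grading, the reference generator from Section~\ref{absolute}, the delooping $q$-shifts, and the constraint of a fixed $\delta$-grading forcing the $Q$-exponent from the $H$-exponent). One small correction to your bookkeeping: the reference generator in Section~\ref{absolute} sits in $\delta$-grading $g=(-2-p+n_+)/2$, not $(-p+n_+)/2$; the latter is $g+1$, which is the common $\delta$-grading of all the wall generators in Figure~\ref{wallfig}, so your working relation $Q = -p + n_+ + 2H$ is correct even though the intermediate identification of $g$ is off by one.
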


Generically, when $p$ is even, both $w_5$ and $w_6$ contribute to
homology, while both $w_3$ and $w_4$ die. (If $p$ is $2$, however,
$w_5$ coincides with an X in Figure~\ref{wallfig}, so it does not
count.)A rank-one summand of $\{w_1, w_2\}$ survives to homology. When
$p$ is odd, both $\{w_5, w_6\}$ and $\{w_3, w_4\}$ contribute rank-1
summands, and both $w_1$ and $w_2$ die.

For the special cases: when $p$ is odd and $q = r = p+1$, then $w_1$,
$w_3$, and $w_5$ contribute to homology. If $p$ is even, $q = p$, and
$r > p$, then $w_3$ and all the dots in the line to its left in
Figure~\ref{7K} survive, while if $r = p = q$ then only $w_4$
survives. If $p$ is odd and $q = p$, then (besides $w_4$ and $w_6$)
the farthest-left dot on the top line of Figure~\ref{7K}
survives. (Note that we have stopped saying ``rank-1 summand,'' for
convenience, choosing instead to pick arbitrarily which generator
survives in some cases.)

We are ready to state the formula for the bigraded reduced Khovanov
homology of $P(-p,q,r)$. There will be several cases, and we will use
the polynomials $a_i$ and $b_i$ from Proposition~\ref{floorgens} and
Proposition~\ref{wallgens} respectively.

\begin{theorem}\label{maintheorembig} Let $p$, $q$, and $r$ be
positive integers with $p \leq q \leq r$. The reduced (even) Khovanov
homology of $P(-p,q,r)$ in grading $\delta$ (with basepoint chosen as
in Figure~\ref{3K}) is free over $\Z$. Let $P_{Kh}$ be its
Poincar{\'e} polynomial over $\Q$; then $P_{Kh}$ is given by the
following formulas when $p \geq 3$. (Sums from 0 to a negative number
are taken to be empty, as before.)

When $p = 2$, note that each case of ``$p$ even'' in the formula has a
term of the form $2Q^{-2 + 3p + n_+ - 2n_-} H^{-1 + 2p - n_-}$. The
coefficient $2$ here should be replaced with a $1$ when $p = 2$,
because the dot $w_5$ is missing from the count of generators.
\begin{itemize}

\item If $p$ is even and $p+2 \leq q$, $r$, then
\begin{align*} P_{Kh} = \sum_{n=1}^3 a_i + \sum_{n=1}^5 b_i &+ 2Q^{2 +
3p + n_+ - 2n_-}H^{2p - n_- + 2} + Q^{-2 + 3p + n_+ - 2n_-}H^{2p -
    n_-} \\ &+ Q^{2 + 3p + n_+ - 2n_-} H^{1 + 2p - n_-} + 2Q^{-2 + 3p
    + n_+ - 2n_-} H^{-1 + 2p - n_-}.
\end{align*}

\item If $p$ is odd and $p+2 \leq q$, $r$, then
\begin{align*} P_{Kh} = \sum_{n=1}^3 a_i + \sum_{n=1}^5 b_i &+ Q^{2 +
3p + n_+ - 2n_-}H^{2p - n_- + 2} + Q^{3p + n_+ - 2n_-}H^{2p - n_- + 1}
  \\ &+ Q^{3p + n_+ - 2n_-} H^{2p - n_-} + Q^{-2 + 3p + n_+ - 2n_-}
  H^{-1 + 2p - n_-}.
\end{align*}

\item If $p$ is even, $q = p+1$, and $r > q$, then
\begin{align*} P_{Kh} = a_2 + \sum_{n=1}^5 b_i &+ Q^{2 + 3p + n_+ -
2n_-}H^{2p - n_- + 2} + Q^{-2 + 3p + n_+ - 2n_-}H^{2p - n_-} \\ &+
  Q^{2 + 3p + n_+ - 2n_-} H^{1 + 2p - n_-} + 2Q^{-2 + 3p + n_+ - 2n_-}
  H^{-1 + 2p - n_-}.
\end{align*}

\item If $p$ is odd, $q = p + 1$, and $r > q$, then
\begin{align*} P_{Kh} = a_2 + \sum_{n=1}^5 b_i &+ Q^{3p + n_+ -
2n_-}H^{2p - n_- + 1} \\ &+ Q^{3p + n_+ - 2n_-} H^{2p - n_-} + Q^{-2 +
    3p + n_+ - 2n_-} H^{-1 + 2p - n_-}.
\end{align*}

\item If $p$ is even and $r = q = p+1$, then
\begin{align*} P_{Kh} = \sum_{n=1}^5 b_i &+ Q^{-2 + 3p + n_+ -
2n_-}H^{2p - n_-} \\ &+ Q^{2 + 3p + n_+ - 2n_-} H^{1 + 2p - n_-} +
  2Q^{-2 + 3p + n_+ - 2n_-} H^{-1 + 2p - n_-}.
\end{align*}

\item If $p$ is odd and $r = q = p+1$, then
\begin{align*} P_{Kh} = &\sum_{n=1}^5 b_i + Q^{3p + n_+ - 2n_-}H^{2p -
n_- + 1} \\ &+ Q^{3p + n_+ - 2n_-} H^{2p - n_-} + Q^{-2 + 3p + n_+ -
    2n_-} H^{-1 + 2p + n_-} \\ &+ Q^{2 + 3p + n_+ - 2n_-} H^{1 + 2p -
    n_-}.
\end{align*}

\item If $p$ is even, $q = p$, and $r > p + 1$, then
\begin{align*} P_{Kh} = a_2 + \sum_{n=1}^5 b_i &+ Q^{-2 + 3p + n_+ -
2n_-}H^{2p - n_-} + Q^{2 + 3p + n_+ - 2n_-}H^{2p - n_- + 2}\\ &+ Q^{2
    + 3p + n_+ - 2n_-} H^{1 + 2p - n_-}\sum_{n=0}^{r-p-1} (Q^2 H)^n
  \\ &+ Q^{3p + n_+ - 2n_-} H^{2p - n_-} \\ &+ 2Q^{-2 + 3p + n_+ -
    2n_-} H^{-1 + 2p - n_-}.
\end{align*}

\item If $p$ is even, $q = p$, and $r = p + 1$, then
\begin{align*} P_{Kh} = \sum_{n=1}^5 b_i &+ Q^{-2 + 3p + n_+ -
2n_-}H^{2p - n_-} \\ &+ Q^{2 + 3p + n_+ - 2n_-} H^{1 + 2p - n_-} +
  Q^{3p + n_+ - 2n_-} H^{2p - n_-}\\ &+ 2Q^{-2 + 3p + n_+ - 2n_-}
  H^{-1 + 2p - n_-}.
\end{align*}

\item If $p$ is even and $p = q = r$, then
\begin{align*} P_{Kh} = \sum_{n=1}^5 b_i &+ Q^{-2 + 3p + n_+ -
2n_-}H^{2p - n_-} \\ &+ 2Q^{3p + n_+ - 2n_-} H^{2p - n_-} + 2Q^{-2 +
    3p + n_+ - 2n_-} H^{-1 + 2p - n_-}.
\end{align*}

\item If $p$ is odd and $q = p$, then
\begin{align*} P_{Kh} = &\sum_{n=1}^5 b_i \\ &+ Q^{-2 + 3p + n_+ -
2n_-} H^{-1 + 2p - n_-} + Q^{3p + n_+ - 2n_-} H^{2p - n_-} \\ &+ Q^{3p
    + n_+ - 2n_- + 2(r-p)} H^{2p - n_- + (r-p)}.
\end{align*}

\end{itemize} The values of $n_+$ and $n_-$ depend on the orientation
of the link $P(-p,q,r)$ and can be computed from
Table~\ref{nplusnminus}.
\end{theorem}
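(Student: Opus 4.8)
The plan is to treat Theorem~\ref{maintheorembig} as a bookkeeping consequence of the structural picture already assembled. By Section~\ref{absolute} we know the reduced Khovanov homology of $P(-p,q,r)$ is free and concentrated in the two $\delta$-gradings $(-2-p+n_+)/2$ (the ``floor'' layer) and $(-p+n_+)/2$ (the ``wall'' layer), and for each parity-and-degeneracy case we already know precisely which generators of the simplified complex survive. So the only remaining task is to record, for each surviving generator, its homological grading $h$, and then recover its $q$-grading via $q = 2(\delta + h)$. The key point, noted at the start of the Appendix, is that $h$ depends only on the \emph{position} of a generator in the cube of Figure~\ref{6K}: every forward step (one step up a $p$-, $q$-, or $r$-column, or one step outward along a wall) raises $h$ by $1$, whether or not that differential component is nonzero. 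Hence computing all the $h$-values is pure lattice-point combinatorics once one reference grading is fixed.

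First I would prove Proposition~\ref{floorgens}. The reference point is the one from Section~\ref{absolute}: the grading-$g$ generator in the top $4$-generator node has $h = p - n_-$, and tracking forward steps down to the floor shows $f_2$ has $h = 2p - n_-$ and $q = -2 + 3p + n_+ - 2n_-$. Now the surviving floor generators are exactly the lattice points of the rectangle of Figure~\ref{7K}; an anti-diagonal (slope $-1$) in that rectangle is a set of floor generators with constant $h$, and moving one anti-diagonal outward raises $h$, and correspondingly $q$ (the delooping dot-shift being uniform on the floor), by the appropriate amount. So for each circled region in Figure~\ref{7K} one counts lattice points on successive anti-diagonals: the ``bulk'' region~(1) gives the trapezoidal sequence $1,2,\dots,q{-}p{-}1,\dots,q{-}p{-}1,\dots,2,1$, which is precisely $\phi_{p,q,r}$; the thin strips~(2),(3) give constant runs, hence the stated geometric sums; and (4),(5),(6) and $f_2$ are single anti-diagonals or single points. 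Reading off the anchoring exponents from $f_2$ produces each $a_i$.

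Proposition~\ref{wallgens} is the same argument on the two walls. Each wall is a rectangular grid of nodes, and after the cancellations of Section~\ref{absolute} the surviving generators are those marked in Figure~\ref{wallfig}, with constant-$h$ loci again anti-diagonals. The relevant region is a triangular corner of side $\sim p$, so the anti-diagonal counts form the ``staircase'' sequence $1,1,2,2,\dots$ defining $\psi(k)$ — the even/odd split in the definition of $\psi$ simply reflects the parity of the side length $p-2$ — while the one-dimensional strips again give geometric sums, and $w_1,\dots,w_6$ are single nodes whose gradings I read directly off their cube positions relative to the top $4$-node reference. This yields $b_1,\dots,b_5$ and the $w_i$ monomials. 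Assembling Theorem~\ref{maintheorembig} is then case-by-case addition: for each of the cases already enumerated in Theorem~\ref{maintheorem2}, the paragraph following Proposition~\ref{wallgens} records exactly which floor regions, which wall regions, and which individual generators $f_1, f_2, w_1,\dots,w_6$ (plus the extra ``staircase'' dots in the degenerate cases) survive, and one sums the corresponding $a_i$, $b_i$, and monomials; the $p=2$ subtlety that $w_5$ coincides with an already-cancelled dot in Figure~\ref{wallfig} is absorbed by the coefficient adjustment stated in the theorem.

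The main obstacle is not conceptual but the sheer bookkeeping. The hard part will be getting every absolute exponent right: the cumulative delooping dot-degree shifts, the $n_+$ and $n_-$ offsets from Table~\ref{nplusnminus}, and the precise match between the ragged ends of the anti-diagonal counts and the closed forms $\phi_{p,q,r}$ and $\psi(k)$. I also expect the degenerate cases ($q = p$, $q = p+1$, $q = r = p+1$, $p = q = r$) to need care, since there the circled pieces of Figures~\ref{7K} and~\ref{wallfig} collapse and one must check that no region is silently dropped or double-counted, and that the coincidences among $w_i$ and cancelled dots are accounted for exactly once.
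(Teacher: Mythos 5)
Your proposal matches the paper's approach exactly: the paper's proof of Theorem~\ref{maintheorembig} is the one-line remark that one carefully counts the surviving generators in Figures~\ref{7K} and~\ref{wallfig}, using Propositions~\ref{floorgens} and~\ref{wallgens} (which themselves come from the ``every forward step raises $h$ by $1$'' observation and the anti-diagonal counts you describe). If anything, your write-up supplies more of the combinatorial scaffolding (the trapezoidal sequence for $\phi_{p,q,r}$, the staircase for $\psi$, the anchoring at $f_2$) than the paper's own stated proof.
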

\begin{proof} The proof consists of carefully looking at
Figure~\ref{7K} and Figure~\ref{wallfig} and counting up the
contributing generators. To save space, and since we have already
indicated how to do this counting, we will omit a more detailed proof
here.
\end{proof}

\subsection{Unreduced homology.}  Although we focused on the reduced
Khovanov homology in this paper, it would not be too difficult to use
this computation to obtain the unreduced homology. The Lee spectral
sequences on reduced and unreduced homology, together with the exact
sequence relating two copies of the reduced homology with the
unreduced homology, give lots of information about the unreduced
homology given the reduced version. In the examples the author
computed, this information was enough to determine the unreduced
homology. As described in \cite{Manion}, the unreduced homology
consists only of knight's-move pairs and exceptional pairs (see
Bar-Natan \cite{OnKhovCat}); while \cite{Manion} works only over $\Q$,
the integral unreduced homology ends up having copies of $\Z_2$ as
expected in the knight's-move pairs and is free otherwise. We will
forego a more rigorous discussion because it would lengthen the paper
without necessarily adding more insight.

\bibliographystyle{plain} \bibliography{biblio}
\end{document}